\newtheorem{theorem}{Theorem}[section]
\newtheorem{lemma}{Lemma}[section]
\theoremstyle{definition}
\newtheorem{definition}{Definition}[section]
\theoremstyle{remark}
\newtheorem{remark}{Remark}[section]
\numberwithin{equation}{section}
\theoremstyle{plain}
\newtheorem{proposition}{Proposition}[section]
\newtheorem{example}{Example}[section]
\begin{document}
\title[\textbf{Image sets in measurable dynamics}]{\textbf{Image sets in measurable dynamics}\\
\vspace{0.2cm}
 } 
\author{Roland Zweim\"{u}ller}
\address{Fakult\"{a}t f\"{u}r Mathematik, Universit\"{a}t Wien, Oskar-Morgenstern-Platz
1, 1090 Vienna, Austria }
\email{roland.zweimueller@univie.ac.at}
\urladdr{http://www.mat.univie.ac.at/\symbol{126}zweimueller/ }
\subjclass[2000]{Primary 28D05, 37A05, 37A25, 37A40.}
\keywords{null-preserving transformations, measure-preserving transformations,
nonsingular ergodic theory}

\begin{abstract}
While routinely used in other areas of dynamics, image sets are ill-defined
objects in general non-invertible measurable dynamics. We propose a way of
consistently working with image sets of null-preserving (and hence, in
particular, of measure-preserving) maps. This concept is illustrated in the
context of basic ergodic properties like recurrence, ergodicity, exactness and
existence of generators. It allows us to turn various suasive but logically
false statements about set-theoretic images into actual theorems, and to
eliminate extra assumptions on the measurability of images from some classical results.

\end{abstract}
\maketitle

\section{Introduction}

The purpose of this note is to address the role of image sets in
non-invertible measurable dynamics, where their na\"{\i}ve use may (and
sometimes does) invalidate formal arguments. Beyond the inescapable fact that
the image-set operation $A\mapsto TA$ associated with a map $T$ does not
commute with the intersection operation, there are two unpleasantries specific
to measurable dynamics. These cause a few inaccuracies and unnecessary
restrictions scattered across the ergodic theory literature. We propose a way
of efficiently alleviating these two problems.

Consider a measure preserving map $T$ on a probability space $(X,\mathcal{A}%
,\mu)$. Unless $T$ is invertible, set-theoretic images $TA$ of measurable sets
$A\in\mathcal{A}$ can exhibit appalling properties and are therefore best
avoided in the general theory: First, in the present general setup, there is
no reason for $TA$ to be measurable. Second, even if $T$ \emph{has measurable
images}, meaning that $A\in\mathcal{A}$ implies $TA\in\mathcal{A}$, there may
be trouble. While the operation $A\mapsto TA$ turns positive measure sets into
positive measure sets (as $\mu(TA)=\mu(T^{-1}TA)\geq\mu(A)$), it does not in
general preserve null-sets. There may be \emph{ambitious null-sets} for $T$,
that is, sets $A\in\mathcal{A}$ with $\mu(A)=0$ for which $TA\in\mathcal{A}$
and $\mu(TA)>0$.

\begin{example}
[\textbf{Ambitious null-sets of probability preserving maps}]%
\label{Ex_MegaSets}\textbf{a)} Let $X:=\{0,1\}$, $\mathcal{A}$ its power set,
and $\mu:=\delta_{0}$ (unit point mass at $x=0$). Then $Tx:=0$ defines a
measure preserving map on the probability space $(X,\mathcal{A},\mu)$. Here,
$A:=\{1\}$ satisfies $\mu(A)=0$ and $\mu(TA)=1$. Admittedly, this bad set
simply disappears if we restrict the map to the forward invariant subset
$Y:=\{0\}$ of full measure, thus passing to a nicer isomorphic version of the
system. Now a more serious example:

\textbf{b)} Let $X:=(0,1]^{\mathbb{N}_{0}}=\{x=(s_{j})_{j\geq0}:s_{j}%
\in(0,1]\}$, $\mathcal{A}:=\bigotimes_{j\geq0}\mathcal{B}_{(0,1]}$, and
$\mu:=\bigotimes_{j\geq0}\lambda^{1}$, where $\lambda^{1}$ denotes
one-dimensional Lebsgue measure. The shift map $T:X\rightarrow X$ with
$T(s_{j})_{j\geq0}:=(s_{j+1})_{j\geq0}$ defines a probability preserving
system of fundamental importance, the \emph{(one-sided) Bernoulli shift}
$(X,\mathcal{A},\mu,T)$ \emph{over} $((0,1],\mathcal{B}_{(0,1]},\lambda^{1})$.
It provides us with the canonical model for an independent sequence of
uniformly distributed random variables $\mathsf{X}_{j}$ in $(0,1]$, via
$(\mathsf{X}_{j})_{j\geq0}:=(\pi\circ T^{j})_{j\geq0}$, where $\pi
((s_{j})_{j\geq0}):=s_{0}$.

This very important system comes with an abundance of ambitious null-sets. For
example, letting $A_{s}:=\{s\}\times(0,1]^{\mathbb{N}}=\pi^{-1}\{s\}\in
\mathcal{A}$, we obviously have $\mu(A_{s})=0$ and $\mu(TA_{s})=1$ for all
$s\in(0,1]$, since $TA_{s}=X$. (This is a folklore example, see e.g.
\cite{A0}, p.7.)

It is impossible to get rid of these problematic sets by removing some small
set of bad points as in a) above: Take any $Y\in\mathcal{A}$ with $\mu
(Y^{c})=0$. Apply Fubini's theorem to the product $X=(0,1]\times
(0,1]^{\mathbb{N}}$ to see that for $\lambda^{1}$-almost every $s\in(0,1]$,
the projection into $(0,1]^{\mathbb{N}}$ of the section $Y\cap A_{s}$ has full
measure under $\bigotimes_{j\geq1}\lambda^{1}$. But this means that
$\mu(T(Y\cap A_{s}))=1$ for all such $s$.
\end{example}

For these reasons, $A\mapsto TA$ is not a meaningful operation in the general
theory. This is regrettable since a good understanding of certain image sets
can be crucial for the study of concrete families of dynamical systems, and
thinking in terms of image sets may aid our intuition also when working in an
abstract framework.\footnote{In fact, various texts define basic concepts from
\emph{topological dynamics} using image sets rather than (better behaved)
preimages, presumably for exactly this reason. For example, \emph{topological
mixing} of $T:X\rightarrow X$ is often defined by requiring that for any
non-empty open $U,V$ one has $T^{n}U\cap V\neq\varnothing$ for $n\geq N(U,V)$.
The equivalent formulation that $U\cap T^{-n}V\neq\varnothing$ for $n\geq
N(U,V)$ is less popular even though it involves nicer objects (the $T^{-n}V$
being open). It seems the consensus is that the first variant is more
intuitive.}%

\vspace{0.2cm}%

In piecewise invertible (countable-to-one) maps, the issue can often be
resolved by slightly modifying the system (see below), but this results in an
unnecessary restriction for the general theory and rules out some very natural
situations like iid sequences of continuous random variables (as above) or
continuous-state Markov chains. Our approach allows us to directly work with
any given system.%

\vspace{0.2cm}%

The issue of measurability has been addressed before. Reference \cite{Helm}
proposes to replace $TA$ by (a version of) its measurable hull. However, this
still does not result in a natural operation (one which preserves set
relations satisfied up to null-sets) and the undesirable phenomena caused by
ambitious null-sets remain.%

\vspace{0.2cm}%

Below we propose to use, in place of the set-theoretic image $TA$ (or its
measurable hull), the\footnote{The term \textquotedblleft\emph{essential
image}\textquotedblright\ has been used in different ways in different
contexts, see for example p.221 of \cite{Lang}. This will hardly cause
confusion in the present setup, though.} \emph{essential image}\
$\hat{T}$%
$A$ of $A$ as defined in \S 2, where we explain why this concept works best in
the framework of $\sigma$-finite spaces and null-preserving (or
measure-preserving) maps. In this setup, $%
\hat{T}%
A$ is always measurable, unique up to sets of measure zero, and has all the
\textquotedblleft right\textquotedblright\ properties, meaning that the
operation $A\mapsto%
\hat{T}%
A$ is consistent with set theoretic relations up to null-sets and behaves well
under countable set operations, while staying as close to the set-theoretic
version as possible (\S 3). Moreover, $A\mapsto%
\hat{T}%
A$ is also consistent with our intuitive understanding of dynamical properties
where the operation $A\mapsto TA$ is not. We illustrate this in
\S \ref{Sec_Basic1} and \S \ref{Sec_Basic2}, where we characterize several
basic ergodic properties of null-preserving (or measure-preserving) dynamical
systems in terms of essential images, thus turning various suasive but
logically false statements involving image sets into actual theorems.
Throughout \S \ref{Sec_Basic1} and \S \ref{Sec_Basic2} the point to keep in
mind therefore is that

\begin{quotation}
\emph{while often easy, many of these results fail (even for systems with
measurable images) if we use set-theoretic images }$TA$\emph{\ (or their
measurable hulls) rather than essential images
$\hat{T}$%
}$A$.
\end{quotation}%

\noindent
The arguments below are elementary and work for arbitrary $\sigma$-finite
measure spaces (rather than just Lebesgue spaces, say) and null-preserving
(not just measure-preserving) maps. The overall conclusion is that, in this
general setup,

\begin{quotation}
\emph{image sets }can\emph{\ be used for rigorous arguments which follow our
intuition, provided they are always interpreted as essential images},
\end{quotation}%

\noindent
and that

\begin{quotation}%
\vspace{-0.2cm}%
\emph{essential images are the proper versions of image sets, enabling to
extend results previously established under additional assumptions.}
\end{quotation}

\noindent
For instance, Theorems \ref{P_TotalDissFullOrbit} and
\ref{T_ExactPPSystemsGrow} exemplify how the use of essential images
eliminates the alien extra condition of measurability of set-theoretic images
from particularly well-known classical results. They rigorously capture and
clarify the principle behind the phenomena which the classical theorems
describe in that more specific setup.%

\vspace{0.2cm}%
\emph{Acknowledgments}. I am indebted to Max Thaler for valuable
suggestions and comments regarding an earlier version of this paper which
helped to significantly improve the presentation. The use of corridors in the
discussion of exactness was also suggested by him (a long time ago). I am also
grateful to Maik Gr\"{o}ger for inspiring discussions related to this subject.
This research was partially supported by the Austrian Science Fund (FWF): P 33943-N.

\section{Essential images and null-preserving maps\label{Sec_Def}}%

\noindent
\textbf{Relations mod null-sets.} For $\nu$ some measure on a measurable space
$(X,\mathcal{A})$, call $A\subseteq X$ a \emph{measurable support} of $\nu$ if
$A\in\mathcal{A}$ and if it carries all the mass of $\nu$ in that $\nu
(A^{c})=0$. We shall say that $\nu$ is \emph{equivalent} to another measure
$\mu$ on $\mathcal{A}$, written $\nu\simeq\mu$, if $\nu\ll\mu\ll\nu$ (mutual
absolute continuity). Given $A\in\mathcal{A}$ we denote the measure killed
outside $A$ by $\nu_{\mid A}$, so that $\nu_{\mid A}(B):=\nu(A\cap B)$,
$B\in\mathcal{A}$. The set $A$ is a \emph{null-set} if $A\in\mathcal{A}$ and
$\nu(A)=0$. We use the term \emph{essential} as a qualifier to indicate that a
property holds up to null-sets.\footnote{As in \emph{essential boundedness}
and \emph{essential infimum/supremum} of a measurable function.}

Let $(X,\mathcal{A},\lambda)$ and $(X^{\prime},\mathcal{A}^{\prime}%
,\lambda^{\prime})$ be measure spaces. We often wish to identify sets which
only differ by a set of measure zero, and for the sake of brevity use the
symbols $\overset{.}{=}$ and $\overset{.}{\subseteq}$ to signify
\emph{essential} \emph{equality}, and \emph{essential inclusion} in the
respective spaces when the measures are understood. Thus, for $A,B\in
\mathcal{A}$, the statement $A\overset{.}{=}B$ means $\lambda(A\triangle
B)=0$, and for $A^{\prime},B^{\prime}\in\mathcal{A}^{\prime}$, we write
$A^{\prime}\overset{.}{\subseteq}B^{\prime}$ to express that $\lambda^{\prime
}(A^{\prime}\setminus B^{\prime})=0$. Under countable set operations these
relations obey the same rules as $=$ and $\subseteq$. This can be expressed by
saying that the quotient space $\widetilde{\mathcal{A}}$ of equivalence
classes forms a \emph{Boolean }$\sigma$\emph{-algebra}, see \S 15.2 of
\cite{Royden}. Trivially, $A\overset{.}{=}B$ iff $A\overset{.}{\subseteq}B$
and $B\overset{.}{\subseteq}A$, and we shall call any such $B\in\mathcal{A}$ a
\emph{version of} $A$. If some property uniquely determines $A$ up to
null-sets, we take the liberty of referring to any of its versions as
\emph{the} set with said property. Given $A,A_{1},A_{2},\ldots\in\mathcal{A}$
we shall write $A_{k}\overset{.}{\nearrow}A$ provided that $A_{k}\overset
{.}{\subseteq}A_{k+1}$ and $A\overset{.}{=}\bigcup_{k\geq1}A_{k}$. For
measurable maps $T,T_{\circ}:X\rightarrow X^{\prime}$ we write $T_{\circ
}\overset{.}{=}T$ if $T_{\circ}=T$ outside some null-set, and call $T_{\circ}$
a \emph{version of} $T$ in this case.

We shall work with actual sets and functions rather than equivalence classes
for the relation $\overset{.}{=}$. Where an object is only defined up to sets
of measure zero, we use an arbitrary but fixed version.%

\vspace{0.2cm}%
%

\noindent
\textbf{Essential images under measurable maps.} While the concept of
essential images will be seen to be most useful in the more specific context
of null-preserving maps, where it has all the desired properties, we begin by
defining it in full generality. Note that in the purely set-theoretic
framework of an arbitrary map $T:X\rightarrow X^{\prime}$ image sets can be
characterized by means of (better behaved) preimages. Indeed, $A^{\prime}$
coincides with the image $TA$ of $A$ iff it satisfies%
\begin{gather*}
A^{\prime}\subseteq X^{\prime}\text{,}\ T^{-1}A^{\prime}\supseteq A\text{,
\quad and}\\
B^{\prime}\subseteq X^{\prime}\text{,}\ T^{-1}B^{\prime}\supseteq
A\Longrightarrow B^{\prime}\supseteq A^{\prime}\text{.}%
\end{gather*}
In a measure-theoretic setup, if we are interested in measurable objects and
properties insensitive to null-sets, the following turns out to be the
adequate analogue.

\begin{definition}
Let $(X,\mathcal{A},\lambda)$ and $(X^{\prime},\mathcal{A}^{\prime}%
,\lambda^{\prime})$ be measure spaces and $T:X\rightarrow X^{\prime}$ a
measurable map. For $A\in\mathcal{A}$ we shall call $A^{\prime}\subseteq
X^{\prime}$ an \emph{essential image of }$A$\emph{\ under }$T$ if it
satisfies
\begin{gather}
A^{\prime}\in\mathcal{A}^{\prime}\text{,}\ T^{-1}A^{\prime}\overset
{.}{\supseteq}A\text{, \quad and}\tag{$\diamondsuit$}\\
B^{\prime}\in\mathcal{A}^{\prime}\text{,}\ T^{-1}B^{\prime}\overset
{.}{\supseteq}A\Longrightarrow B^{\prime}\overset{.}{\supseteq}A^{\prime
}\text{.}\tag{$\heartsuit$}%
\end{gather}

\end{definition}

\begin{remark}
\label{Rem_FirstObservations}It is clear that neither condition is affected if
any of the measures is replaced by an equivalent one. By ($\heartsuit$) any
two essential images $A_{1}^{\prime},A_{2}^{\prime}$ of $A$ satisfy
$A_{1}^{\prime}\overset{.}{=}A_{2}^{\prime}$. It is also immediate that
$A^{\prime}$ is an essential image of $A$ iff it is an essential image of
every set $B\in\mathcal{A}$ with $A\overset{.}{=}B$.
\end{remark}

\begin{example}
[\textbf{Trivial essential images}]\label{Ex_TrivEssImg}\textbf{a)} For
arbitrary $(X,\mathcal{A})$ and $(X^{\prime},\mathcal{A}^{\prime})$, if
$\lambda=\lambda^{\prime}=\#$ (counting measure), then $\overset{.}{\supseteq
}$ is equivalent to $\supseteq$ in either space, so that essential images
coincide with set-theoretic images.\newline\textbf{b)} For arbitrary
$(X,\mathcal{A},\lambda)$, $(X^{\prime},\mathcal{A}^{\prime},\lambda^{\prime
})$ and $T$, the essential images of any null set $A\in\mathcal{A}$ are
exactly the null-sets $A^{\prime}\in\mathcal{A}^{\prime}$.\newline\textbf{c)}
Let $X=X^{\prime}:=(0,1]$, with $\mathcal{A}=\mathcal{A}^{\prime}%
:=\mathcal{B}_{(0,1]}$ (the Borel $\sigma$-algebra), while $\lambda
:=\lambda^{1}$ (one-dimensional Lebsgue measure) and $\lambda^{\prime}:=\#$.
Consider $Tx:=x$. Then the null-sets are the only sets $A\in\mathcal{A}$ which
possess essential images under $T$, and the null-sets $A^{\prime}%
\in\mathcal{A}^{\prime}$ are the only subsets of $X^{\prime}$ which assume the
role of essential images.
\end{example}

Beyond the formal similarity to the set-theoretic characterization of image
sets above, it is enlightening to rephrase the definition in terms of the
image of the restricted measure $\lambda_{\mid A}$ under $T$, as this offers a
compelling probabilistic interpretation. Take $A\in\mathcal{A}$ with
$\lambda(A)>0$. Assuming (w.l.o.g.) that $\lambda_{\mid A}$ is normalized, and
viewing it as the distribution of some random element $\mathsf{X}$ of $X$, the
image measure $\lambda_{\mid A}\circ T^{-1}$ is the distribution of the image
point $T\mathsf{X}$, and a measurable support $A^{\prime}$ of $\lambda_{\mid
A}\circ T^{-1}$ is a set which $T\mathsf{X}$ belongs to almost surely. A
prediction like $T\mathsf{X}\in A^{\prime}$ a.s., however, is most useful if
$A^{\prime}$ as small as possible. This is exactly what essential images achieve.

\begin{theorem}
[\textbf{Image measure characterization for measurable maps}]%
\label{T_Vorbereitung}Let $(X,\mathcal{A},\lambda)$ and $(X^{\prime
},\mathcal{A}^{\prime},\lambda^{\prime})$ be measure spaces and
$T:X\rightarrow X^{\prime}$ a measurable map. Consider $A\in\mathcal{A}$ and
$A^{\prime}\in\mathcal{A}^{\prime}$. Then the following are
equivalent:\newline\newline\textbf{(i)} $A^{\prime}$ is an essential image of
$A$;\newline\newline\textbf{(ii)} $A^{\prime}$ is a $\lambda^{\prime}$-minimal
measurable support of the image measure $\lambda_{\mid A}\circ T^{-1}%
$;\newline\newline\textbf{(iii)} we have $0=(\lambda_{\mid A}\circ
T^{-1})_{\mid(A^{\prime})^{c}}$\ and\ $\lambda_{\mid A^{\prime}}^{\prime}%
\ll(\lambda_{\mid A}\circ T^{-1})_{\mid A^{\prime}}$.
\end{theorem}

Here, a $\lambda^{\prime}$\emph{-minimal} set of a certain type is one which
is contained, up to sets of $\lambda^{\prime}$-measure zero, in every other
set $B^{\prime}\in\mathcal{A}^{\prime}$ of that type.

\begin{proof}
It is immediate that each of the statements ($\diamondsuit$) and
$0=(\lambda_{\mid A}\circ T^{-1})_{\mid(A^{\prime})^{c}}$ is equivalent to
saying that $A^{\prime}$ is a measurable support of $\lambda_{\mid A}\circ
T^{-1}$, and ($\heartsuit$) obviously states that $A^{\prime}$ is a
$\lambda^{\prime}$-minimal with this property (i.e. one cannot remove any
$\lambda^{\prime}$-positive subset without losing this feature). It thus
remains to show that $\lambda_{\mid A^{\prime}}^{\prime}\ll(\lambda_{\mid
A}\circ T^{-1})_{\mid A^{\prime}}$ is equivalent to this minimality condition.

Suppose first that $A^{\prime}$ is an essential image of $A$. To prove the
asserted absolute continuity, assume the contrary, meaning that there is some
$C^{\prime}\in\mathcal{A}^{\prime}$, $C^{\prime}\subseteq A^{\prime}$, with
$\lambda(T^{-1}C^{\prime})=0$ while $\lambda^{\prime}(C^{\prime})>0$. Then
$B^{\prime}:=A^{\prime}\setminus C^{\prime}\in\mathcal{A}^{\prime}$ satisfies
$T^{-1}B^{\prime}\overset{.}{=}T^{-1}Y^{\prime}\overset{.}{\supseteq}A$. But
$B^{\prime}$ does not contain $A^{\prime}$ (mod $\lambda^{\prime}$),
contradicting assumption ($\heartsuit$).

Now start from $\lambda_{\mid A^{\prime}}^{\prime}\ll(\lambda_{\mid A}\circ
T^{-1})_{\mid A^{\prime}}$ and take any $B^{\prime}\in\mathcal{A}^{\prime}$
with$\ T^{-1}B^{\prime}\overset{.}{\supseteq}A$. Then $C^{\prime}:=A^{\prime
}\setminus B^{\prime}\in\mathcal{A}^{\prime}$ belongs to $\mathcal{A}^{\prime
}$ and satisfies $A\cap T^{-1}C^{\prime}=A\cap(T^{-1}A^{\prime}\setminus
T^{-1}B^{\prime})\overset{.}{\subseteq}A\cap A^{c}=\varnothing$, that is,
$\lambda(A\cap T^{-1}C^{\prime})=0$. Due to absolute continuity, this entails
$\lambda^{\prime}(C^{\prime})=0$ so that $B^{\prime}\overset{.}{\supseteq
}A^{\prime}$, as required in ($\heartsuit$).
\end{proof}%

\vspace{0.2cm}%
{}

We illustrate the use of this observation to obtain further explicit examples
of essential images as soon as $X$ itself has an essential image.

\begin{proposition}
[\textbf{Essential image of} $T^{-1}D^{\prime}$]\label{P_Vorbereitung2}Let
$(X,\mathcal{A},\lambda)$ and $(X^{\prime},\mathcal{A}^{\prime},\lambda
^{\prime})$ be measure spaces and $T:X\rightarrow X^{\prime}$ a measurable
map. Suppose that $Y^{\prime}\in\mathcal{A}^{\prime}$ is an essential image of
$X$. Then for every $D^{\prime}\in\mathcal{A}^{\prime}$ the set $Y^{\prime
}\cap D^{\prime}$ is an essential image of $T^{-1}D^{\prime}$.
\end{proposition}

\begin{proof}
It is easy to check that $A:=T^{-1}D^{\prime}$ and $A^{\prime}:=Y^{\prime}\cap
D^{\prime}$ satisfy the two conditions of Theorem \ref{T_Vorbereitung} (iii).
In fact, the first becomes $\lambda_{\mid T^{-1}D^{\prime}}\circ
T^{-1}((D^{\prime})^{c})=0$, which is trivially true. For the second take any
$C^{\prime}\in\mathcal{A}^{\prime}$ and suppose that $0=(\lambda_{\mid
T^{-1}D^{\prime}}\circ T^{-1})_{\mid Y^{\prime}\cap D^{\prime}}(C^{\prime
})=(\lambda\circ T^{-1})_{\mid Y^{\prime}}(D^{\prime}\cap C^{\prime})$. Since
$\lambda_{\mid Y^{\prime}}^{\prime}\ll(\lambda\circ T^{-1})_{\mid Y^{\prime}}$
holds for the essential image $Y^{\prime}$ of $X$, the latter implies
$0=\lambda_{\mid Y^{\prime}}^{\prime}(D^{\prime}\cap C^{\prime})=\lambda_{\mid
Y^{\prime}\cap D^{\prime}}^{\prime}(C^{\prime})$, which proves $\lambda_{\mid
Y^{\prime}\cap D^{\prime}}^{\prime}\ll(\lambda_{\mid T^{-1}D^{\prime}}\circ
T^{-1})_{\mid Y^{\prime}\cap D^{\prime}}$.
\end{proof}%

\vspace{0.2cm}%
{}

As in Example \ref{Ex_TrivEssImg} c), a map may fail to have non-trivial
essential images. However, as soon as $\lambda^{\prime}$ is $\sigma$-finite
(or at least admits an equivalent finite measure), all measurable sets have
essential images.

\begin{theorem}
[\textbf{Existence of\ essential images}]\label{T_EssImgExist}Let
$(X,\mathcal{A},\lambda)$ and $(X^{\prime},\mathcal{A}^{\prime},\lambda
^{\prime})$ be measure spaces and $T:X\rightarrow X^{\prime}$ a measurable
map. If $\lambda^{\prime}$ is $\sigma$-finite, then every $A\in\mathcal{A}$
possesses an essential image.
\end{theorem}

This is a very easy consequence of the following standard

\begin{lemma}
Let $(X,\mathcal{A},\lambda)$ be a $\sigma$-finite measure spaces and
$\mathcal{G}\subseteq\mathcal{A}$ a nonempty family of sets which is closed
under countable intersections. Then $\mathcal{G}$ contains an essentially
unique set $G_{\ast}$ such that $G_{\ast}\overset{.}{\subseteq}G$ for all
$G\in\mathcal{G}$.
\end{lemma}

\begin{proof}
Passing to an equivalent measure if necessary, we can assume w.l.o.g. that
$\lambda(X)<\infty$. Letting $\alpha:=\inf\{\lambda(G):G\in\mathcal{G}%
\}<\infty$, choose a sequence $(G_{n})_{n\geq1}$ in $\mathcal{G}$ for which
$\lambda(G_{n})\rightarrow\alpha$. Then the set $G_{\ast}:=\bigcap_{n\geq
1}G_{n}$ also belongs to $\mathcal{G}$ and satisfies $\lambda(G_{\ast}%
)=\alpha$. For any $G\in\mathcal{G}$ we have $G_{\ast}\cap G\in\mathcal{G}$
and hence $\lambda(G_{\ast}\cap G)=\alpha$, which shows that $G_{\ast}%
\overset{.}{\subseteq}G$.
\end{proof}%

\vspace{0.1cm}%
{}

\begin{proof}
[Proof of Theorem \ref{T_EssImgExist}]Fix any $A\in\mathcal{A}$ and consider
the family of all measurable supports of $\lambda_{\mid A}\circ T^{-1}$, that
is, $\mathcal{G}_{A}^{\prime}:=\{B^{\prime}\in\mathcal{A}^{\prime}$%
:$\ T^{-1}B^{\prime}\overset{.}{\supseteq}A\}$. Note that $X^{\prime}%
\in\mathcal{G}_{A}^{\prime}$, and that $\mathcal{G}_{A}^{\prime}$ is closed
under countable intersections. Let $A^{\prime}$ be the $\lambda^{\prime}%
$-minimal element of $\mathcal{G}_{A}^{\prime}$ promised by the lemma. In view
of Theorem \ref{T_Vorbereitung} this is an essential image of $A$.
\end{proof}%

\vspace{0.2cm}%
{}

\emph{In the following we shall therefore concentrate on }$\sigma
$\emph{-finite measure spaces}, so that each $A\in\mathcal{A}$ has essential
images under any measurable map $T:X\rightarrow X^{\prime}$. Essential images
do have a number of useful properties even in this very general setup.
However, as they are defined via conditions involving null-sets, the concept
work best when the map $T$ also respects null-sets. Otherwise some versions of
essential image sets may not be essential images themselves.

\begin{example}
\label{Ex_BadEssImg}Consider $X=X^{\prime}:=\{0\}$ with $\lambda(\{0\}):=1$
while $\lambda^{\prime}(\{0\}):=0$. The only map $T:X\rightarrow X^{\prime}$
is given by $T0:=0$, and $A^{\prime}:=X^{\prime}$ is an essential image of
$X$, whereas $B^{\prime}:=\varnothing$ is not, even though $A^{\prime}%
\overset{.}{=}B^{\prime}$.
\end{example}%

\vspace{0.2cm}%
%

\noindent
\textbf{Null-preserving maps.} Given measure spaces $(X,\mathcal{A},\lambda) $
and $(X^{\prime},\mathcal{A}^{\prime},\lambda^{\prime})$, one minimal
assumption, standard in ergodic theory, which ensures that a measurable map
$T:X\rightarrow X^{\prime}$ respects null-sets, is that $T$ should be
\emph{null-preserving} (with respect to $\lambda$ and $\lambda^{\prime}$),
meaning that the image of $\lambda$ under $T$ is$\ $absolutely$\ $continuous,
$\lambda\circ T^{-1}\ll\lambda^{\prime}$. Explicitly,
\begin{equation}
\lambda^{\prime}(A^{\prime})=0\text{ \ implies \ }\lambda(T^{-1}A^{\prime
})=0\text{ \quad for }A^{\prime}\in\mathcal{A}^{\prime}\text{. }%
\end{equation}
It is straightforward that this is equivalent to
\begin{equation}
A^{\prime}\overset{.}{\subseteq}B^{\prime}\Longrightarrow T^{-1}A^{\prime
}\overset{.}{\subseteq}T^{-1}B^{\prime}\text{ \quad for }A^{\prime},B^{\prime
}\in\mathcal{A}^{\prime}\text{,}\label{Eq_NullPresEquiv}%
\end{equation}
and also to the corresponding statement with $\overset{.}{\subseteq}$ replaced
by $\overset{.}{=}$. In the null-preserving case the canonical preimage
operation $T^{-1}:\mathcal{A}^{\prime}\rightarrow\mathcal{A}$ thus preserves
the relations $\overset{.}{=}$ and $\overset{.}{\subseteq}$. Whence $T^{-1}$
can be seen as a $\sigma$\emph{-homomorphism} of the Boolean $\sigma$-algebras
$\widetilde{\mathcal{A}}^{\prime}$ and $\widetilde{\mathcal{A}}$. For example,
$A_{k}\overset{.}{\nearrow}A$ implies $T^{-1}A_{k}\overset{.}{\nearrow}%
T^{-1}A$. Any version $T_{\circ}\overset{.}{=}T$ of a null-preserving map $T$
is again null-preserving and satisfies $T_{\circ}^{-1}A^{\prime}\overset{.}%
{=}T^{-1}A^{\prime}$ for all $A^{\prime}\in\mathcal{A}^{\prime}$.\newline

Being null-preserving does not ensure that the map has measurable images, and
it does not rule out the existence of ambitious null-set for\emph{\ }$T$, see
Example \ref{Ex_MegaSets}. However, it is exactly the property required to
avoid the problem with essential images illustrated in Example
\ref{Ex_BadEssImg}.

\begin{theorem}
[\textbf{Null-preserving maps and }$\lambda^{\prime} $\textbf{-consistent
essential images}]\label{T_WhyNullPresForEssImg}Let $(X,\mathcal{A},\lambda)$
and $(X^{\prime},\mathcal{A}^{\prime},\lambda^{\prime})$ be $\sigma$-finite
measure spaces and $T:X\rightarrow X^{\prime}$ a measurable map. Then $T$ is
null-preserving iff for every $A\in\mathcal{A}$ with essential image
$A^{\prime}\in\mathcal{A}^{\prime}$ all sets $B^{\prime}\in\mathcal{A}%
^{\prime}$ with $A^{\prime}\overset{.}{=}B^{\prime}$ are also essential images
of $A$.
\end{theorem}

\begin{proof}
\textbf{(i)} If $T$ is null-preserving, take any $A\in\mathcal{A}$ and let
$A^{\prime}\in\mathcal{A}^{\prime}$ be an essential image of $A$ (which exist
by Theorem \ref{T_EssImgExist}). For $C^{\prime}\in\mathcal{A}^{\prime}$ with
$C^{\prime}\overset{.}{=}A^{\prime}$ we then have $T^{-1}C^{\prime}\overset
{.}{=}T^{-1}A^{\prime}\overset{.}{\supseteq}A$ by (\ref{Eq_NullPresEquiv}),
and $C^{\prime}$ is $\lambda^{\prime}$-minimal with this property since
$A^{\prime}$ is.

\textbf{(ii)} Suppose that $T$ is not null-preserving, so that there is some
$D^{\prime}\in\mathcal{A}^{\prime}$ for which $\lambda^{\prime}(D^{\prime})=0$
while $\lambda(T^{-1}D^{\prime})>0$. Let $Y^{\prime}\in\mathcal{A}^{\prime}$
be an essential image of $X$. The set $A^{\prime}:=Y^{\prime}\cap D^{\prime}$
trivially satisfies $\lambda^{\prime}(A^{\prime})=0$ and, according to
Proposition \ref{P_Vorbereitung2}, is an essential image of $A:=T^{-1}%
D^{\prime}\in\mathcal{A}$. Further, $A^{\prime}\overset{.}{=}C^{\prime
}:=\varnothing$, but $C^{\prime}$ is not an essential image of $A$ since
$T^{-1}C^{\prime}=\varnothing$ does not contain $A$ (mod $\lambda$).
\end{proof}%

\vspace{0.2cm}%
{}

\emph{Therefore, we shall henceforth focus on null-preserving maps}. In this
setup, the operation of taking essential images thus defines a map between the
Boolean $\sigma$-algebras $\widetilde{\mathcal{A}}$ and $\widetilde
{\mathcal{A}}^{\prime}$. However, we will continue to work with individual sets.%

\vspace{0.2cm}%
{}

For null-preserving maps, the conditions of Theorem \ref{T_Vorbereitung}
become even simpler. This framework also allows us to characterize essential
image in terms of the \emph{transfer operator} $\widehat{T}$ of the
null-preserving map $T$. For $u\in\mathcal{L}_{1}(\lambda)$ and $\nu$ the
measure with density $u$ with respect to $\lambda$, we let $\widehat{T}u$
denote any version (fixed for the statement or argument in which it occurs) of
the density of $\nu\circ T^{-1}$ w.r.t. $\lambda^{\prime}$. Then,
$\int(f^{\prime}\circ T)\,u\,d\lambda=\int f^{\prime}\,\widehat{T}%
u\,d\lambda^{\prime}$ for $u\in\mathcal{L}_{1}(\lambda)$ and $f^{\prime}%
\in\mathcal{L}_{\infty}(\lambda^{\prime})$, and the definition of $\widehat
{T}u$ extends to possibly non-integrable measurable $u:X\rightarrow
\lbrack0,\infty)$ in the obvious way.%

\vspace{0.2cm}%

\begin{theorem}
[\textbf{Image measure characterization for null-preserving maps}%
]\label{T_ImgMeasChar2}Let $(X,\mathcal{A},\lambda)$ and $(X^{\prime
},\mathcal{A}^{\prime},\lambda^{\prime})$ be $\sigma$-finite measure spaces
and $T:X\rightarrow X^{\prime}$ a null-preserving map. Consider $A\in
\mathcal{A}$ and $A^{\prime}\in\mathcal{A}^{\prime}$. Then the following are
equivalent:\newline\newline\textbf{(i)} $A^{\prime}$ is an essential image of
$A$;\newline\newline\textbf{(ii)} $A^{\prime}$ is the $\lambda^{\prime}%
$-minimal set on which $\lambda_{\mid A}\circ T^{-1}$ is equivalent to
$\lambda^{\prime}$;\newline\newline\textbf{(iii)} the measure $\lambda_{\mid
A}\circ T^{-1}$ is equivalent to $\lambda_{\mid A^{\prime}}^{\prime}$%
;\newline\newline\textbf{(iv)} we have $A^{\prime}\overset{.}{=}\{\widehat
{T}1_{A}>0\}$.
\end{theorem}

\begin{proof}
According to Theorem \ref{T_Vorbereitung}, (i) is equivalent to $0=(\lambda
_{\mid A}\circ T^{-1})_{\mid(A^{\prime})^{c}}$\ plus\ $\lambda_{\mid
A^{\prime}}^{\prime}\ll(\lambda_{\mid A}\circ T^{-1})_{\mid A^{\prime}}$. But
if $T$ is null-preserving, we also have $(\lambda_{\mid A}\circ T^{-1})_{\mid
A^{\prime}}\ll\lambda_{\mid A^{\prime}}^{\prime}$ which shows that the
condition from that theorem is then equivalent to (iii). The latter is
obviously the same as (ii). Equivalence of (iii) and (iv) is clear since
$\widehat{T}1_{A}$ is the density of $\lambda_{\mid A}\circ T^{-1}$.
\end{proof}%

\vspace{0.2cm}%
{}

\section{Properties of essential images under null-preserving maps}%

\noindent
We are now ready to confirm that under a null-preserving map $T:X\rightarrow
X^{\prime}$ between $\sigma$-finite measure spaces$(X,\mathcal{A},\lambda) $
and $(X^{\prime},\mathcal{A}^{\prime},\lambda^{\prime})$, essential image sets
have all the properties advertized in the introduction. Motivated by the
relation to the transfer operator (Theorem \ref{T_ImgMeasChar2} (iv) and
statement (\ref{Eq_PFOO}) below) we shall use the following

\begin{quotation}
\textbf{Notation:}
$\hat{T}$%
$A$ denotes an arbitrary essential image of $A\in\mathcal{A}$, fixed for the
statement or argument in which it occurs.
\end{quotation}

Clearly, general statements about
$\hat{T}$%
$A$ can only hold up to sets of measure zero, and only countable set
operations are well defined on essential images, while uncountable unions etc
are not. For example, the trivial fact that any set $A^{\prime}\in
\mathcal{A}^{\prime}$ with $A^{\prime}\overset{.}{=}\varnothing$ is an
essential image of $A:=\varnothing$, can be expressed by writing $%
\hat{T}%
\varnothing\overset{.}{=}\varnothing$. Note that in view of Theorem
\ref{T_WhyNullPresForEssImg}, a statement like $A^{\prime}\overset{.}{=}%
\hat{T}%
A$ does imply that the specific set $A^{\prime}$ is an essential image of $A $
under $T$.%

\vspace{0.2cm}%

The operation of taking essential images has natural properties, and goes well
with countable set operations. The following theorem collects some basic
facts. The proofs are easy exercises (in patience). But \emph{be aware that
due to the possibility of ambitious null-sets, the \textquotedblleft
obvious\textquotedblright\ statements (\ref{Eq_PositivityPreservation}%
)-(\ref{Eq_EssentialImageBabyProp_01_e}) and (\ref{Eq_OKtoChangeTonNullSet})
are false if we replace essential images by ordinary set-theoretic images,
even if we assume that the latter are measurable}.

\begin{theorem}
[\textbf{Elementary properties of essential images }%
$\hat{T}$%
$A$]\label{T_ElementaryPropsEssentialImage}For any null-preserving map
$T:X\rightarrow X^{\prime}$ between two $\sigma$-finite measure spaces
$(X,\mathcal{A},\lambda)$ and $(X^{\prime},\mathcal{A}^{\prime},\lambda
^{\prime})$ the following hold.\newline\newline\textbf{(i)} Every
$A\in\mathcal{A}$ posesses an essential image $A^{\prime}=%
\hat{T}%
A$. The essential images of $A$ form an equivalence class under $\overset
{.}{=}$.\newline\newline\newline\textbf{(ii)} For $A\in\mathcal{A}$ and
$A^{\prime}\in\mathcal{A}^{\prime}$,
\begin{align}
\lambda(A) &  >0\text{\quad iff\quad}\lambda^{\prime}(%
\hat{T}%
A)>0\text{,}\label{Eq_PositivityPreservation}\\
\lambda(T^{-1}A^{\prime}) &  >0\text{\quad iff\quad}\lambda^{\prime}(%
\hat{T}%
X\cap A^{\prime})>0\text{.}\label{Eq_PositivityPreservationBw}%
\end{align}
\newline\textbf{(iii)} For $A,B\in\mathcal{A}$ and $A^{\prime},B^{\prime}%
\in\mathcal{A}^{\prime}$,
\begin{align}
& A\overset{.}{\subseteq}B\text{\quad implies\quad}%
\hat{T}%
A\overset{.}{\subseteq}%
\hat{T}%
B\text{,}\label{Eq_EssentialImageBabyProp_01_b}\\
& A\overset{.}{=}B\text{\quad implies\quad}%
\hat{T}%
A\overset{.}{=}%
\hat{T}%
B\text{,}\label{Eq_EssentialImageBabyProp_01_c}\\
& A\overset{.}{\subseteq}T^{-1}B^{\prime}\text{\quad iff\quad}%
\hat{T}%
A\overset{.}{\subseteq}B^{\prime}\text{.}%
\label{Eq_EssentialImageBabyProp_01_e}\\
& A\overset{.}{=}T^{-1}B^{\prime}\text{\quad implies\quad}A\overset{.}%
{=}T^{-1}%
\hat{T}%
A\text{.}\label{Eq_EssentialImageBabyProp_01_r}\\
& T^{-1}A^{\prime}\overset{.}{\subseteq}T^{-1}B^{\prime}\quad\text{iff}\quad%
\hat{T}%
X\cap A^{\prime}\overset{.}{\subseteq}%
\hat{T}%
X\cap B^{\prime}\text{.}\label{Eq_EssentialImageBabyProp_01_w}\\
& \exists M^{\prime}\in\mathcal{A}^{\prime}\text{ s.t. }A\overset{.}%
{\subseteq}T^{-1}M^{\prime}\text{ \& }B\overset{.}{\subseteq}(T^{-1}M^{\prime
})^{c}\quad\text{iff}\quad%
\hat{T}%
A\cap%
\hat{T}%
B\overset{.}{=}\varnothing\text{.}\label{Eq_EssentialImageBabyProp_01_w2}%
\end{align}
\newline\textbf{(iv)} For $A\in\mathcal{A}$ and $B^{\prime}\in\mathcal{A}%
^{\prime}$,
\begin{align}
& T^{-1}%
\hat{T}%
A\overset{.}{\supseteq}A\text{,}\label{Eq_EssentialImageBabyProp_01_f}\\
& T^{-1}(%
\hat{T}%
X\cap B^{\prime})\overset{.}{=}T^{-1}B^{\prime}\text{,}%
\label{Eq_EssentialImageBabyProp_01_x}\\
&
\hat{T}%
T^{-1}B^{\prime}\overset{.}{=}%
\hat{T}%
X\cap B^{\prime}\text{,}\label{Eq_EssentialImageBabyProp_01_g}\\
&
\hat{T}%
\left(  A\cap T^{-1}B^{\prime}\right)  \overset{.}{=}%
\hat{T}%
A\cap B^{\prime}\text{.}\label{Eq_EssentialImageBabyProp_01_h}\\
& \nonumber
\end{align}
\newline\textbf{(v)} For any $A_{n}\in\mathcal{A}$, $n\geq1$,
\begin{align}
&
\hat{T}%
\left(
{\textstyle\bigcup\nolimits_{n\geq1}}
A_{n}\right)  \overset{.}{=}%
{\textstyle\bigcup\nolimits_{n\geq1}}
\hat{T}%
A_{n}\text{,}\label{Eq_EssentialImageBabyProp_01_i}\\
&
\hat{T}%
\left(
{\textstyle\bigcap\nolimits_{n\geq1}}
A_{n}\right)  \overset{.}{\subseteq}%
{\textstyle\bigcap\nolimits_{n\geq1}}
\hat{T}%
A_{n}\text{.}\label{Eq_EssentialImageBabyProp_01_j}%
\end{align}
\newline\textbf{(vi)} Let $T^{\prime}:X^{\prime}\rightarrow X^{\prime\prime}$
be a null-preserving map between the $\sigma$-finite spaces $(X^{\prime
},\mathcal{A}^{\prime},\lambda^{\prime})$ and $(X^{\prime\prime}%
,\mathcal{A}^{\prime\prime},\lambda^{\prime\prime})$. Then, for any
$A\in\mathcal{A}$,
\begin{equation}
\widehat{(T^{\prime}\circ T)}A\overset{.}{=}%
\hat{T}%
^{\prime}%
\hat{T}%
A\text{.}%
\end{equation}
Hence, if $(X,\mathcal{A},\lambda)=(X^{\prime},\mathcal{A}^{\prime}%
,\lambda^{\prime})$, then $\widehat{T^{n}}\,A\overset{.}{=}%
\hat{T}%
^{n}A$ for $A\in\mathcal{A}$ and $n\geq1$.\newline\newline\textbf{(vii)} Let
$T_{\circ}:(X,\mathcal{A},\lambda)\rightarrow(X^{\prime},\mathcal{A}^{\prime
},\lambda^{\prime})$ be another null-preserving map. Then
\begin{equation}
T=T_{\circ}\text{ a.e. on }A\text{\quad implies\quad}%
\hat{T}%
A\overset{.}{=}%
\hat{T}%
_{\circ}A\text{.}\label{Eq_OKtoChangeTonNullSet}%
\end{equation}
In particular, if $T=T_{\circ}$ a.e. on $X$, then $%
\hat{T}%
A\overset{.}{=}%
\hat{T}%
_{\circ}A$ for all $A\in\mathcal{A}$.\newline\newline\textbf{(viii)} If
$\lambda$ or $\lambda^{\prime}$ is replaced by an equivalent $\sigma$-finite
measure, then the essential images
$\hat{T}$%
$A$ of any $A\in\mathcal{A}$ remain the same.\newline\newline\textbf{(ix)} For
any measurable function $u:X\rightarrow\lbrack0,\infty)$,
\begin{equation}%
\hat{T}%
\{u>0\}\overset{.}{=}\{\widehat{T}u>0\}\text{.}\label{Eq_PFOO}%
\end{equation}

\end{theorem}%

\vspace{0.2cm}%
{}

\begin{proof}
[\textbf{Proof of Theorem \ref{T_ElementaryPropsEssentialImage}}]\textbf{(i)}
This is immediate from Remark \ref{Rem_FirstObservations}, Theorem
\ref{T_EssImgExist} and Theorem \ref{T_WhyNullPresForEssImg}.

\textbf{(ii)} According to Example \ref{Ex_TrivEssImg} b), we have
$\lambda(A)=0$ iff $\lambda^{\prime}(%
\hat{T}%
A)=0$, which proves (\ref{Eq_PositivityPreservation}). In view of Proposition
\ref{P_Vorbereitung2}, $%
\hat{T}%
X\cap A^{\prime}$ is the essential image of $T^{-1}A^{\prime}$, so that
(\ref{Eq_PositivityPreservationBw}) is a special case of
(\ref{Eq_PositivityPreservation}).

\textbf{(iv)} Assertion (\ref{Eq_EssentialImageBabyProp_01_f}) merely restates
condition ($\diamondsuit$). To obtain (\ref{Eq_EssentialImageBabyProp_01_x}),
note that $T^{-1}B^{\prime}=T^{-1}(B^{\prime}\cap%
\hat{T}%
X)\cup T^{-1}(B^{\prime}\setminus%
\hat{T}%
X)$ and (using (\ref{Eq_EssentialImageBabyProp_01_f})) $T^{-1}(B^{\prime
}\setminus%
\hat{T}%
X)\subseteq X\setminus T^{-1}%
\hat{T}%
X\overset{.}{=}\varnothing$.

Statement (\ref{Eq_EssentialImageBabyProp_01_g}) recalls Proposition
\ref{P_Vorbereitung2}. To validate its generalization
(\ref{Eq_EssentialImageBabyProp_01_h}), we show that $C^{\prime}:=%
\hat{T}%
A\cap B^{\prime}\in\mathcal{A}^{\prime}$ is an essential image of $C:=A\cap
T^{-1}B^{\prime}\in\mathcal{A}$ via the criterion of Theorem
\ref{T_ImgMeasChar2} (iii). Observe that $\lambda_{\mid C}\circ T^{-1}%
(E^{\prime})=\lambda_{\mid A}\circ T^{-1}(B^{\prime}\cap E^{\prime})$ while
$\lambda_{\mid C^{\prime}}^{\prime}(E^{\prime})=\lambda_{\mid%
\hat{T}%
A}^{\prime}(B^{\prime}\cap E^{\prime})$ for any $E^{\prime}\in\mathcal{A}%
^{\prime}$. By Theorem \ref{T_ImgMeasChar2}, $\lambda_{\mid A}\circ T^{-1} $
is equivalent to $\lambda_{\mid%
\hat{T}%
A}^{\prime}$, which proves that $\lambda_{\mid C}\circ T^{-1}(E^{\prime})>0$
iff $\lambda_{\mid C^{\prime}}^{\prime}(E^{\prime})>0$, as required.

\textbf{(iii)} If $A\overset{.}{\subseteq}B$, then $A\overset{.}{\subseteq
}T^{-1}%
\hat{T}%
B$ by ($\diamondsuit$) for $%
\hat{T}%
B$, and ($\heartsuit$) for $%
\hat{T}%
A$ implies $%
\hat{T}%
B\overset{.}{\supseteq}%
\hat{T}%
A$, which proves (\ref{Eq_EssentialImageBabyProp_01_b}). Statement
(\ref{Eq_EssentialImageBabyProp_01_c}) is immediate from
(\ref{Eq_EssentialImageBabyProp_01_b}).

Now suppose $A\overset{.}{\subseteq}T^{-1}B^{\prime}$, then $%
\hat{T}%
A\overset{.}{\subseteq}B^{\prime}$ by ($\heartsuit$). Conversely, if $%
\hat{T}%
A\overset{.}{\subseteq}B^{\prime}$, then $T^{-1}%
\hat{T}%
A\overset{.}{\subseteq}T^{-1}B^{\prime}$ follows since $T$ is null-preserving,
and (\ref{Eq_EssentialImageBabyProp_01_f}) yields $A\overset{.}{\subseteq
}T^{-1}B^{\prime}$, thus proving (\ref{Eq_EssentialImageBabyProp_01_e}).

Turning to (\ref{Eq_EssentialImageBabyProp_01_r}), assume $A\overset{.}%
{=}T^{-1}B^{\prime}$ and note that by (\ref{Eq_EssentialImageBabyProp_01_f})
we have $A\overset{.}{\subseteq}T^{-1}%
\hat{T}%
A$, so that we only need to check $T^{-1}%
\hat{T}%
A\overset{.}{\subseteq}A$, that is, $\lambda(A^{c}\cap T^{-1}%
\hat{T}%
A)=0$. But by assumption and (\ref{Eq_EssentialImageBabyProp_01_g}),
$A^{c}\cap T^{-1}%
\hat{T}%
A\overset{.}{=}T^{-1}((B^{\prime})^{c}\cap B^{\prime}\cap%
\hat{T}%
X)\overset{.}{=}\varnothing$.

Consider statement (\ref{Eq_EssentialImageBabyProp_01_w}), and assume first
that $%
\hat{T}%
X\cap A^{\prime}\overset{.}{\subseteq}%
\hat{T}%
X\cap B^{\prime}$. As $T$ is null-preserving, this implies $T^{-1}(%
\hat{T}%
X\cap A^{\prime})\overset{.}{\subseteq}T^{-1}(%
\hat{T}%
X\cap B^{\prime})$ and hence, via (\ref{Eq_EssentialImageBabyProp_01_x}),
$T^{-1}A^{\prime}\overset{.}{\subseteq}T^{-1}B^{\prime}$. For the converse
suppose that $T^{-1}A^{\prime}\overset{.}{\subseteq}T^{-1}B^{\prime}$. Then,
(\ref{Eq_EssentialImageBabyProp_01_b}) and
(\ref{Eq_EssentialImageBabyProp_01_g}) immediately give $%
\hat{T}%
X\cap A^{\prime}\overset{.}{\subseteq}%
\hat{T}%
X\cap B^{\prime}$.

As for assertion (\ref{Eq_EssentialImageBabyProp_01_w2}), assume first that
$A\overset{.}{\subseteq}T^{-1}M^{\prime}$ and $B\overset{.}{\subseteq}%
(T^{-1}M^{\prime})^{c}$ for some $M^{\prime}\in\mathcal{A}^{\prime}$. Then
(\ref{Eq_EssentialImageBabyProp_01_e}) shows that $%
\hat{T}%
A\overset{.}{\subseteq}M^{\prime}$ while $%
\hat{T}%
B\overset{.}{\subseteq}(M^{\prime})^{c}$. Conversely, suppose that $%
\hat{T}%
A\cap%
\hat{T}%
B\overset{.}{=}\varnothing$, and set $M^{\prime}:=%
\hat{T}%
A\in\mathcal{A}^{\prime}$. Due to (\ref{Eq_EssentialImageBabyProp_01_f}) we
then have $A\overset{.}{\subseteq}T^{-1}M^{\prime}$ and $B\overset
{.}{\subseteq}T^{-1}%
\hat{T}%
B\overset{.}{\subseteq}T^{-1}(M^{\prime})^{c}$.

\textbf{(v)} Due to (\ref{Eq_EssentialImageBabyProp_01_b}) we have $%
\hat{T}%
A_{n}\overset{.}{\subseteq}%
\hat{T}%
(%
{\textstyle\bigcup\nolimits_{n\geq1}}
A_{n})$ for $n\geq1$, and hence $%
{\textstyle\bigcup\nolimits_{n\geq1}}
\hat{T}%
A_{n}\overset{.}{\subseteq}%
\hat{T}%
(%
{\textstyle\bigcup\nolimits_{n\geq1}}
A_{n})$. On the other hand, (\ref{Eq_EssentialImageBabyProp_01_f}) yields
$T^{-1}(%
{\textstyle\bigcup\nolimits_{n\geq1}}
\hat{T}%
A_{n})\overset{.}{=}%
{\textstyle\bigcup\nolimits_{n\geq1}}
T^{-1}%
\hat{T}%
A_{n}\overset{.}{\supseteq}%
{\textstyle\bigcup\nolimits_{n\geq1}}
A_{n}$. Condition ($\heartsuit$) for $%
\hat{T}%
(%
{\textstyle\bigcup\nolimits_{n\geq1}}
A_{n})$ now shows that $%
{\textstyle\bigcup\nolimits_{n\geq1}}
\hat{T}%
A_{n}\overset{.}{\supseteq}%
\hat{T}%
(%
{\textstyle\bigcup\nolimits_{n\geq1}}
A_{n})$, thus establishing (\ref{Eq_EssentialImageBabyProp_01_i}).

By (\ref{Eq_EssentialImageBabyProp_01_b}), $%
\hat{T}%
(%
{\textstyle\bigcap\nolimits_{n\geq1}}
A_{n})\overset{.}{\subseteq}%
\hat{T}%
A_{n}$ for all $n\geq1$. Therefore (\ref{Eq_EssentialImageBabyProp_01_j})
holds as well.

\textbf{(vi)} We show that $A^{\prime\prime}:=%
\hat{T}%
^{\prime}%
\hat{T}%
A\in\mathcal{A}^{\prime\prime}$ is an essential image of $A$ under $T^{\prime
}\circ T$. First, (\ref{Eq_EssentialImageBabyProp_01_f}) shows that $%
\hat{T}%
A\overset{.}{\subseteq}(T^{\prime})^{-1}%
\hat{T}%
^{\prime}%
\hat{T}%
A$, and since $T$ is null-preserving, this gives, using
(\ref{Eq_EssentialImageBabyProp_01_f}) once more, $A\overset{.}{\subseteq
}T^{-1}%
\hat{T}%
A\overset{.}{\subseteq}T^{-1}(T^{\prime})^{-1}%
\hat{T}%
^{\prime}%
\hat{T}%
A\overset{.}{=}(T^{\prime}\circ T)^{-1}A^{\prime\prime}$, proving
($\diamondsuit$). Second, take any $B^{\prime\prime}\in\mathcal{A}%
^{\prime\prime}$ with $A\overset{.}{\subseteq}(T^{\prime}\circ T)^{-1}%
B^{\prime\prime}$. Due to (\ref{Eq_EssentialImageBabyProp_01_b}),
(\ref{Eq_EssentialImageBabyProp_01_g}) and
(\ref{Eq_EssentialImageBabyProp_01_j}), we find that ($\heartsuit$) holds,
too, since
\[
A^{\prime\prime}\overset{.}{=}%
\hat{T}%
^{\prime}%
\hat{T}%
A\overset{.}{\subseteq}%
\hat{T}%
^{\prime}%
\hat{T}%
T^{-1}(T^{\prime})^{-1}B^{\prime\prime}\overset{.}{=}%
\hat{T}%
^{\prime}(%
\hat{T}%
X\cap(T^{\prime})^{-1}B^{\prime\prime})\overset{.}{\subseteq}%
\hat{T}%
^{\prime}(T^{\prime})^{-1}B^{\prime\prime}\overset{.}{\subseteq}%
B^{\prime\prime}\text{.}%
\]

\textbf{(vii)} Neither of conditions ($\diamondsuit$) and ($\heartsuit$)
changes if we replace $T$ by $T_{0}$.

\textbf{(viii)} This has been pointed out in Remark
\ref{Rem_FirstObservations}.

\textbf{(ix)} The definition of $\widehat{T}$ entails $\{\widehat
{T}u>0\}\overset{.}{=}\{\widehat{T}1_{\{u>0\}}>0\}$. Assertion (\ref{Eq_PFOO})
then follows via condition (iv) of Theorem \ref{T_ImgMeasChar2}.
\end{proof}%

\vspace{0.2cm}%

\begin{remark}
Various other natural properties follow at once. For example,
(\ref{Eq_EssentialImageBabyProp_01_c}) plus
(\ref{Eq_EssentialImageBabyProp_01_i}) show that $A_{k}\overset{.}{\nearrow}A$
implies $%
\hat{T}%
A_{k}\overset{.}{\nearrow}%
\hat{T}%
A$.
\end{remark}%

\vspace{0.2cm}%
%

\noindent
\textbf{A characterization of the essential image\ operation.} The above
confirms that essential images have the desired natural properties. Given a
null-preserving map $T$, it turns out that $A\mapsto%
\hat{T}%
A$ is the \emph{unique} monotone and null/positive preserving map $%
\check{T}%
:\mathcal{A}\rightarrow\mathcal{A}^{\prime}$ which resembles the set-theoretic
image operation in that $%
\check{T}%
T^{-1}B^{\prime}\overset{.}{\subseteq}B^{\prime}$ for $B^{\prime}%
\in\mathcal{A}^{\prime}$.

\begin{theorem}
[\textbf{Charactrization of} $A\mapsto%
\hat{T}%
A$]Consider a null-preserving map $T:X\rightarrow X^{\prime}$ between two
$\sigma$-finite measure spaces $(X,\mathcal{A},\lambda)$ and $(X^{\prime
},\mathcal{A}^{\prime},\lambda^{\prime})$. Assume that $%
\check{T}%
:\mathcal{A}\rightarrow\mathcal{A}^{\prime}$ satisfies, for $A,B\in
\mathcal{A}$ and $B^{\prime}\in\mathcal{A}^{\prime}$,
\begin{align}
&  A\overset{.}{\subseteq}B\text{\quad implies\quad}%
\check{T}%
A\overset{.}{\subseteq}%
\check{T}%
B\text{,}\label{Eq_Prop2}\\
&
\check{T}%
T^{-1}B^{\prime}\overset{.}{\subseteq}B^{\prime}\text{,}\label{Eq_Prop3}\\
&  \lambda(A)>0\text{\quad iff\quad}\lambda^{\prime}(%
\check{T}%
A)>0\text{,}\label{Eq_Prop1}%
\end{align}
then $%
\check{T}%
A\overset{.}{=}%
\hat{T}%
A$ for $A\in\mathcal{A}$.
\end{theorem}

\begin{proof}
Fix any $A\in\mathcal{A}$. Recalling $A\overset{.}{\subseteq}T^{-1}%
\hat{T}%
A$, we first note that (\ref{Eq_Prop2}) and (\ref{Eq_Prop3}) immediately imply
$%
\check{T}%
A\overset{.}{\subseteq}%
\check{T}%
T^{-1}%
\hat{T}%
A\overset{.}{\subseteq}%
\hat{T}%
A$.

To check that also $%
\hat{T}%
A\overset{.}{\subseteq}%
\check{T}%
A$, we prove that $%
\check{T}%
A$ is a measurable support of $\lambda_{\mid A}\circ T^{-1}$. Assume for a
contradiction that
\begin{equation}
\lambda_{\mid A}\circ T^{-1}((%
\check{T}%
A)^{c})=\lambda(A\cap T^{-1}(%
\check{T}%
A)^{c})>0\text{.}\label{Eq_gsvdacghcvsgvcvvvvvsvvsvvs}%
\end{equation}
As $B:=A\cap T^{-1}(%
\check{T}%
A)^{c}\overset{.}{\subseteq}A$, property (\ref{Eq_Prop2}) ensures that $%
\check{T}%
B\overset{.}{\subseteq}%
\check{T}%
A$. On the other hand, $B\overset{.}{\subseteq}T^{-1}(%
\check{T}%
A)^{c}$, so that (\ref{Eq_Prop2}) and (\ref{Eq_Prop3}) give $%
\check{T}%
B\overset{.}{\subseteq}%
\check{T}%
T^{-1}(%
\check{T}%
A)^{c}\overset{.}{\subseteq}(%
\check{T}%
A)^{c}$. Together, these imply $%
\check{T}%
B\overset{.}{=}\varnothing$, which in view of (\ref{Eq_Prop1}) contradicts
(\ref{Eq_gsvdacghcvsgvcvvvvvsvvsvvs}).
\end{proof}%

\vspace{0.2cm}%
%

\noindent
\textbf{Essential images and set-theoretic images.} Let us further
substantiate the claim that essential images are not only similar to ordinary
set-theoretic images, but really are the right objects to study. Part (iii) of
the next observation confirms that in situations with measurable images,
$\hat{T}$%
$A$ is indeed a version of a set-theoretic image, provided that we take a
suitable version of the set $A$ to start with. Statement (iv) shows that the
two unpleasantries discussed in the introduction are in fact the only
potential obstacles to a consistent use of set-theoretic images.

\begin{theorem}
[\textbf{Essential images versus set-theoretic images}]%
\label{P_EssImgVsSetImg}Consider a null-preserving map $T:X\rightarrow
X^{\prime}$ between two $\sigma$-finite measure spaces $(X,\mathcal{A}%
,\lambda)$ and $(X^{\prime},\mathcal{A}^{\prime},\lambda^{\prime})$. Then the
following hold for every $A\in\mathcal{A}$.\newline\newline\textbf{(i)} If
$TA\subseteq A^{\prime}\in\mathcal{A}^{\prime}$, then $%
\hat{T}%
A\overset{.}{\subseteq}A^{\prime}$.\newline\newline\textbf{(ii)} In
particular, if $TA\in\mathcal{A}^{\prime}$, then $%
\hat{T}%
A\overset{.}{\subseteq}TA$.\newline\newline\textbf{(iii)} Moreover, if
$TA\in\mathcal{A}^{\prime}$, then there is some $A_{\circ}\in\mathcal{A}$,
$A_{\circ}\subseteq A$, such that
\[
A_{\circ}\overset{.}{=}A\text{\quad and}\quad TA_{\circ}\in\mathcal{A}%
^{\prime}\text{ with }%
\hat{T}%
A\overset{.}{=}%
\hat{T}%
A_{\circ}\overset{.}{=}TA_{\circ}\text{.}%
\]
\textbf{(iv)} If $T$ has measurable images and no ambitious null-sets, then%
\[%
\hat{T}%
A\overset{.}{=}TA.
\]

\end{theorem}

\begin{proof}
\textbf{(i) \& (ii)} For (i) take the test set $C^{\prime}:=%
\hat{T}%
A\setminus A^{\prime}\in\mathcal{A}^{\prime}$, then $A\cap T^{-1}C^{\prime
}\subseteq A\cap(T^{-1}A^{\prime})^{c}=\varnothing$, so that $\lambda(A\cap
T^{-1}C^{\prime})=0$. By definition of $%
\hat{T}%
A$ this implies $\lambda^{\prime}(%
\hat{T}%
A\setminus A^{\prime})=\lambda^{\prime}(%
\hat{T}%
A\cap C^{\prime})=0$, as required. For (ii) let $A^{\prime}:=TA$.

\textbf{(iii)} Set $A_{\circ}:=A\cap T^{-1}%
\hat{T}%
A\in\mathcal{A}$, then $A_{\circ}\overset{.}{=}A$ because of
(\ref{Eq_EssentialImageBabyProp_01_f}), and
(\ref{Eq_EssentialImageBabyProp_01_b}) ensures that $%
\hat{T}%
A_{\circ}\overset{.}{=}%
\hat{T}%
A$. On the other hand, $TA_{\circ}=TA\cap%
\hat{T}%
A\in\mathcal{A}^{\prime}$. To verify $%
\hat{T}%
A_{\circ}\overset{.}{=}TA_{\circ}$, it remains to check that $TA\cap%
\hat{T}%
A\overset{.}{=}%
\hat{T}%
A$, which is is clear from (ii).

\textbf{(iv)} For $T$ with measurable images, (iii) shows that $%
\hat{T}%
A\overset{.}{=}TA_{\circ}$, and as $T$ has no ambitious null-sets, $A_{\circ
}\overset{.}{=}A$ implies $TA_{\circ}\overset{.}{=}TA$ since $\lambda^{\prime
}(T(A\setminus A_{\circ}))=0$.
\end{proof}%

\vspace{0.2cm}%

Property (i) shows that $%
\hat{T}%
A$ is always contained (mod $\lambda^{\prime}$) in the measurable hull of $TA$
which was used in \cite{Helm}. As a caveat we mention that without
measurability of $TA$, assertion (iii) of the theorem fails:
\begin{equation}%
\hat{T}%
A\text{ need not be a version of }T_{\circ}A_{\circ}\text{ for any }A_{\circ
}\overset{.}{=}A\text{ and }T_{\circ}\overset{.}{=}T\text{.}%
\end{equation}

\begin{example}
Here is a probability-preserving map $T:X\rightarrow X^{\prime}$ with a set
$A\in\mathcal{A}$ such that there is no $A_{\circ}\in\mathcal{A}$ satisfying
$A_{\circ}\overset{.}{=}A$ and $%
\hat{T}%
A\overset{.}{=}TA_{\circ}$.

Let $X=X^{\prime}:=\{0,1\}$, $\mathcal{A}$ the power set of $X$, while
$\mathcal{A}^{\prime}:=\{\varnothing,X\}$, and let $\mu=\mu^{\prime}%
:=(\delta_{0}+\delta_{1})/2$. Then the identity $Tx:=x$ defines a measurable
map of $(X,A,\mu)$ onto $(X^{\prime},A^{\prime},\mu^{\prime}) $ with $\mu\circ
T^{-1}=\mu^{\prime}$. Take $A:=\{0\}$, then there are no other versions
$A_{\circ}$ of $A$, or $T_{\circ}$ of $T$, and $X^{\prime}$ is the only
essential image of $A$. But $TA$ is not measurable, $TA\notin\mathcal{A}%
^{\prime}$.
\end{example}%

\vspace{0.2cm}%
%

\noindent
\textbf{(Non-)existence of ambitious null-sets. Countable-to-one maps.}
Complementing part (iv) of the preceding theorem, we include a brief
discussion concerning the (non-)existence of ambitious null-sets\footnote{In
the context of real analysis, the absence of ambitious null-sets for a real
function $T$ and Lebesgue measure $\lambda=\lambda^{1}$ is sometimes called
Lusin's \emph{property N}.}. Recall first that the latter may depend on which
version of $T$ we take (see Example \ref{Ex_MegaSets} a)), but that it is not
always possible to remove these sets (Example \ref{Ex_MegaSets} b)).

Still, there is an easy condition which ensures that all ambitious null-sets
can be removed: Call a null-preserving map $T:X\rightarrow X^{\prime}$
\emph{piecewise invertible} if it admits a countable collection of pairwise
disjoint sets $X_{j}\in\mathcal{A}$ (w.l.o.g. with $\lambda(X_{j})>0$), $j\in
J$, such that $X\overset{.}{=}%
{\textstyle\bigcup\nolimits_{j\in J}}
X_{j}$ where for each $j$ the restriction (or \emph{branch}) $T\mid_{X_{j}%
}:X_{j}\rightarrow X^{\prime}$ is injective and has measurable images.

\begin{theorem}
[\textbf{Piecewise invertibility and ambitious null-sets}]Let $T:X\rightarrow
X^{\prime}$ be a piecewise invertible null-preserving map between two $\sigma
$-finite measure spaces $(X,\mathcal{A},\lambda)$ and $(X^{\prime}%
,\mathcal{A}^{\prime},\lambda^{\prime})$. Then there is some $Y\in\mathcal{A}$
with $Y\overset{.}{=}X$ for which $T\mid_{Y}$ has measurable images and no
ambitious null-sets.
\end{theorem}

\begin{proof}
Assume w.l.o.g. that $\lambda^{\prime}$ is finite, and let $E:=X\setminus%
{\textstyle\bigcup\nolimits_{j\in J}}
X_{j}$. Take any $j\in J$, and consider $\mathcal{M}_{j}:=\{TA:A\in
\mathcal{A}$, $A\subseteq X_{j}$ and $\lambda(A)=0\}\subseteq\mathcal{A}%
^{\prime}$. By a routine exhaustion argument, each $\mathcal{M}_{j}$ contains
a $\lambda^{\prime}$-maximal element $M_{j}=TA_{j}$ (for some null-set
$A_{j}\in\mathcal{A}\cap X_{j}$). Set $Y_{j}:=X_{j}\cap T^{-1}M_{j}^{c}$, then
$T\mid_{Y_{j}}:Y_{j}\rightarrow X^{\prime}$ is injective with measurable
images and no ambitious null-sets. We have $X\overset{.}{=}Y:=%
{\textstyle\bigcup\nolimits_{j\in J}}
Y_{j}$ since $Y^{c}=E\cup%
{\textstyle\bigcup\nolimits_{j\in J}}
A_{j}$.
\end{proof}%

\vspace{0.2cm}%

However, in general piecewise invertibility is not necessary for $T$ to have
measurable images and no ambitious null-sets (but see Theorem \ref{T_Chisty} below).

\begin{example}
Let $\mathcal{A}$ be the $\sigma$-algebra of countable and co-countable sets
on $X:=(0,1],$ and let $\lambda:=\lambda^{1}\mid_{\mathcal{A}}$ be the
restriction of one-dimensional Lebesgue measure to $\mathcal{A}$. Consider the
doubling map $T:X\rightarrow X$ with $Tx:=2x$ $\operatorname{mod}1$. Easy
elementary arguments show that $T$ is measure preserving as a map of
$(X,\mathcal{A},\lambda)$ into itself, and has measurable images but no
ambitious null-sets. Yet $T$ is not piecewise injective on $(X,\mathcal{A}%
,\lambda)$: If $X\overset{.}{=}%
{\textstyle\bigcup\nolimits_{j\in J}}
X_{j}$ for pairwise disjoint $X_{j}\in\mathcal{A}$, then there is exactly one
$j^{\ast}\in J$ such that $X_{j^{\ast}}$ has countable complement. Hence
$X_{j^{\ast}}$ has full Lebesgue measure, so that $T$ cannot be injective on
that set.
\end{example}%

\vspace{0.2cm}%

Nonetheless, in the special case of Borel measurable maps between \emph{Polish
spaces} $X$ and $X^{\prime}$ (spaces with a topology induced by a complete
separable metric) one can say more. First, if the space $X$ is rich enough to
accommodate a measure zero Cantor set $C$, then $C$ is an ambitious null-set
for a suitable version $T_{0}$ of $T$, since any Borel set in the Polish space
$X^{\prime}$ is a measurable image of $C$ under a suitable map, see Theorem
2.5 of \cite{Part}.

Second, there is a converse to the implication of the previous theorem. Here
it is not even necessary to explicitly require $T$ to have measurable images
(as in our definition of picewise invertiblity), since an injective Borel map
between Borel sets is automatically bi-measurable, see e.g. Corollary 3.3 of
\cite{Part}. We therefore say that the null-preserving map $T:X\rightarrow
X^{\prime}$ is \emph{piecewise injective} if there is a countable collection
of pairwise disjoint sets $X_{j}\in\mathcal{A}$, $j\in J$, such that
$X\overset{.}{=}%
{\textstyle\bigcup\nolimits_{j\in J}}
X_{j}$ where for each $j$ the restriction $T\mid_{X_{j}}:X_{j}\rightarrow
X^{\prime}$ is injective. The main result of \cite{Chisty} can be restated as

\begin{theorem}
[\textbf{Piecewise injective maps between Polish spaces}]\label{T_Chisty}Let
$X$ and $X^{\prime}$ be Polish spaces with Borel $\sigma$-algebras
$\mathcal{B}_{X}$ and $\mathcal{B}_{X^{\prime}}$, respectively, and let
$T:X\rightarrow X^{\prime}$ be a null-preserving map between $(X,\mathcal{B}%
_{X},\lambda)$ and $(X^{\prime},\mathcal{B}_{X^{\prime}},\lambda^{\prime})$,
where $\lambda\ $and $\lambda^{\prime}$ are $\sigma$-finite. Then $T$ is
piecewise injective iff there is some $Y\in\mathcal{A}$ with $Y\overset{.}%
{=}X$ such that $T\mid_{Y}$ has measurable images and no ambitious null-sets.
\end{theorem}%

\vspace{0.2cm}%
%

\noindent
\textbf{A basic dynamical /probabilistic example.} We conclude the general
discussion by illustrating that essential images do provide the right answer
in the context of a fundamental type of measure preserving systems (or
stochastic processes).

\begin{example}
[\textbf{Images of cylinder sets of a Markov shift}]Let $I$ be a finite set,
$\mathsf{P}=(p_{i,j})_{i,j\in I}$ an irreducible stochastic matrix over $I$,
and $\mathsf{p}=(p_{i})_{i\in I}$ its invariant probability distribution,
$\mathsf{p}=\mathsf{pP}$. A canonical way of constructing the corresponding
stationary Markov chain $(\mathsf{X}_{n})_{n\geq0}$ with state space $I$ is to
take $X:=I^{\mathbb{N}_{0}}=\{x=(j_{k})_{k\geq0}:j_{k}\in I\}$, with $\sigma
$-algebra $\mathcal{A}$ generated by all cylinder sets $[i_{0},\ldots
,i_{m-1}]:=\{x=(j_{k})_{k\geq0}:j_{k}=i_{k}$ for $0\leq k<m\}$, and Markov
measure $\mu$ characterized by $\mu([i_{0},\ldots,i_{m-1}])=p_{i_{0}}%
p_{i_{0},i_{1}}\cdots p_{i_{m-2},i_{m-1}}$ for all cylinders. The shift map
$T:X\rightarrow X$ with $T(j_{k})_{k\geq0}:=(j_{k+1})_{k\geq0}$ preserves
$\mu$. Now define $\pi:X\rightarrow I$ by $\pi(j_{k})_{k\geq0}:=j_{0}$, and
set $\mathsf{X}_{n}:=\pi\circ T^{n}:X\rightarrow I$, $n\geq0$.

For a cylinder of the form $A:=[i]$, we trivially have $TA=X$, so that the
set-theoretic image in this concrete representation of the Markov chain does
not enable us to make a useful prediction if we know that $x\in A$, or
equivalently, $\mathsf{X}_{0}=i$. On the other hand,
\begin{equation}%
\hat{T}%
A\overset{.}{=}%
{\textstyle\bigcup\nolimits_{j:p_{i,j}>0}}
[j]\text{,}\label{Eq_PredictMC}%
\end{equation}
corresponding to the obvious natural prediction that $\mathsf{X}_{0}=i$ a.s.
implies $\mathsf{X}_{1}\in B:=%
{\textstyle\bigcup\nolimits_{j:p_{i,j}>0}}
[j]$ a.s. To validate (\ref{Eq_PredictMC}) we can use the transfer operator
(easily obtained from the transition matrix), and observe that $\widehat
{T}1_{A}\overset{.}{=}\sum_{j\in I}\frac{p_{i,j}}{p_{j}}1_{[j]}$, and hence
$\{\widehat{T}1_{A}>0\}\overset{.}{=}B$. Now recall Theorem
\ref{T_ImgMeasChar2} (iv).
\end{example}%

\vspace{0.2cm}%

\section{Essential images and basic dynamical properties\label{Sec_Basic1}}%

\noindent
\textbf{Null-preserving dynamical systems.} In the following, a
\emph{null-preserving (dynamical) system} is a tuple $\mathfrak{S}%
=(X,\mathcal{A},\lambda,T)$ with $(X,\mathcal{A},\lambda)$ some $\sigma
$-finite measure space, and $T:X\rightarrow X$ a null-preserving map. The goal
of this section is to show that essential images allow us to describe some
basic dynamical properties and objects in a way compatible with our intuitive
understanding of image sets.

The dynamical features discussed below are not affected if we change the maps,
sets, or functions involved on sets of measure zero. By routine arguments
which we do not reproduce here, we can regard the systems given by two
null-preserving maps $S$ and $T$ on $(X,\mathcal{A},\lambda)$ as the same
whenever $S\overset{.}{=}T$. It is therefore enough for $T$ to be defined
outside some null-set.

To illustrate some of the results below (and, in particular, the necessity of
using essential images rather than set-theoretic images), we will occasionally
refer to

\begin{example}
[\textbf{Two continuous-state Markov chains}]\label{Ex_CSMC}\textbf{a)} Let
$X:=(0,2]^{\mathbb{N}_{0}}=\{x=(s_{j})_{j\geq0}:s_{j}\in(0,2]\}$,
$\mathcal{A}:=\bigotimes_{j\geq0}\mathcal{B}_{(0,2]}$, and consider the shift
map $T:X\rightarrow X$ with $T(s_{j})_{j\geq0}:=(s_{j+1})_{j\geq0}$. For $E$
any Borel set in $\mathbb{R}$ use $\lambda_{E}^{1}$ to denote the normalized
restriction of Lebesgue measure $\lambda^{1}$ to $E$. Write $I_{j}:=(j,j+1]$,
and let $\mu:=\frac{1}{2}\bigotimes_{j\geq0}\lambda_{I_{0}}^{1}+\frac{1}%
{2}\bigotimes_{j\geq0}\lambda_{I_{1}}^{1}$, which is a $T$-invariant
probability on $(X,\mathcal{A})$. Note that each $A_{s}:=\{s\}\times
(0,2]^{\mathbb{N}}\in\mathcal{A}$ satisfies $\mu(A_{s})=0$ while $TA_{s}=X$.

Under $\mu$, the process $(\mathsf{X}_{j})_{j\geq0}:=(\pi\circ T^{j})_{j\geq
0}$, where $\pi((s_{j})_{j\geq0}):=s_{0}$, first picks, with probability
$\frac{1}{2}$ each, $E=I_{0}$ or $E=I_{1}$, and then produces an iid sequence
of uniformly distributed numbers in $E$.

\textbf{b)} Set $X:=(0,3]^{\mathbb{N}_{0}}$, $\mathcal{A}:=\bigotimes_{j\geq
0}\mathcal{B}_{(0,3]}$, with the shift map $T:X\rightarrow X$. Let $\lambda$
be the normalized Markov measure on $(X,\mathcal{A})$ representing the chain
with initial distribution $\lambda_{(0,3]}^{1}$ and transition probabilities
given by $P(s,F):=\lambda_{I_{j}}^{1}(F)$ if $s\in I_{j}$ with $j\in\{0,1\}$,
while $P(s,F):=\lambda_{I_{0}\cup I_{1}}^{1}(F)$ if $s\in I_{2}$. Explicitly,
\[
\lambda=\tfrac{1}{3}\left(
{\textstyle\bigotimes\nolimits_{j\geq0}}
\lambda_{I_{0}}^{1}+%
{\textstyle\bigotimes\nolimits_{j\geq0}}
\lambda_{I_{1}}^{1}+\lambda_{I_{2}}^{1}\otimes(\tfrac{1}{2}%
{\textstyle\bigotimes\nolimits_{j\geq1}}
\lambda_{I_{0}}^{1}+\tfrac{1}{2}%
{\textstyle\bigotimes\nolimits_{j\geq1}}
\lambda_{I_{1}}^{1})\right)  \text{.}%
\]
This gives a null-preserving system $(X,\mathcal{A},\lambda,T)$. The sets
$A_{s}:=\{s\}\times(0,3]^{\mathbb{N}}\in\mathcal{A}$ satisfy $\lambda
(A_{s})=0$ and $TA_{s}=X$.

In this case the canonical coordinate process $(\mathsf{X}_{j})$, under
$\lambda$, starts uniformly distributed in $(0,3]$, but then continues a.s. in
$(0,2]$, imitating the chain in a).
\end{example}

\begin{remark}
For a null-preserving system $\mathfrak{S}=(X,\mathcal{A},\lambda,T)$, the
transfer operator of $T$ is a standard tool for analysing and understanding
ergodic properties. In view of condition (iv) of Theorem \ref{T_ImgMeasChar2}
and property (ix) of Theorem \ref{T_ElementaryPropsEssentialImage}, it is
clear that one can often use results about the operator to understand
essential images. However, essential images are the more elementary concept
(in that they do not depend on the Radon-Nikodym theorem), and below we
largely avoid using the operator in order to illustrate this very point.
\end{remark}%

\vspace{0.2cm}%
%

\noindent
\textbf{Invariant sets.} Given a null-preserving system $\mathfrak{S}%
=(X,\mathcal{A},\lambda,T)$, a set $A\in\mathcal{A}$ is \emph{forward
invariant} (or \emph{absorbing}) if $A\overset{.}{\subseteq}T^{-1}A$. It is
\emph{invariant}\footnote{In the context of null-preserving (or
measure-preserving) systems, it seems most natural to \emph{a priori} define
notions like (forward) invariant sets, wandering sets, tail sets etc via
conditions insensitive to null-sets, at least as long as countable
(semi)groups of maps are considered. We skip the easy routine arguments
proving that this leads to the standard concepts of ergodicity,
conservativity, exactness etc. For example, for any invariant set
$A\overset{.}{=}T^{-1}A$ in the sense of our definition there is some
\emph{strictly} invariant set $B=T^{-1}B$ with $A\overset{.}{=}B$.} if
$A\overset{.}{=}T^{-1}A$. In either case, we can restrict $T$ to $A$ to obtain
a smaller null-preserving system\footnote{Note that $T\mid_{A}$ need not map
all of $A$ into $A$, but it maps a.e. point of $A$ into $A$, and we use the
convention that the map only has to be defined outside some null-set.}
$\mathfrak{S}\mid_{A}:=(A,A\cap\mathcal{A},\lambda\mid_{A\cap\mathcal{A}%
},T\mid_{A})$. In the second case, $A^{c}$ is also forward invariant, and we
can study the subsystems $\mathfrak{S}\mid_{A}$ and $\mathfrak{S}\mid_{A^{c}}$
separately. The system is \emph{ergodic} if every invariant set $A$ satisfies
$0\in\{\lambda(A),\lambda(A^{c})\}$.

It is tempting to intuitively interpret forward invariance as meaning that
$TA\subseteq A$ (mod $\lambda$). Due to the possibility of ambitious null-sets
this is false, even for probability preserving maps and measurable $TA$.

\begin{example}
\label{Ex_CSMC_A}The system $(X,\mathcal{A},\mu,T)$ of Example \ref{Ex_CSMC}
a) clearly fails to be ergodic, as the natural set $B:=I_{0}^{\mathbb{N}_{0}%
}\in\mathcal{A}$ with $\mu(B)=1/2$ is invariant, $T^{-1}B\overset{.}{=}B$.
Note, however, that the second invariant set in the \textquotedblleft obvious
ergodic decomposition (mod $\mu$)\textquotedblright\ of $X$ into $B$ and
$B^{c}=:A$ satisfies $TA=X$ since $A_{s}\subseteq A$ for all $s\in I_{2}$.
\end{example}

Nonetheless, the corresponding statement for essential images is correct.

\begin{theorem}
[\textbf{Invariant sets via essential images}]%
\label{Prop_BasicPropsViaEssentialImages}Let $\mathfrak{S}=(X,\mathcal{A}%
,\lambda,T)$ be a null-preserving system and $A\in\mathcal{A}$. Then,
\begin{equation}
A\text{ is forward invariant \quad iff \quad}%
\hat{T}%
A\overset{.}{\subseteq}A.\newline\label{Eq_FwdInvarSets}%
\end{equation}
In particular,
\begin{equation}
A\text{ is invariant \quad iff \quad}%
\hat{T}%
A\overset{.}{\subseteq}A\text{ and }%
\hat{T}%
A^{c}\overset{.}{\subseteq}A^{c}.\newline%
\end{equation}
Therefore $\mathfrak{S}$ is ergodic iff $%
\hat{T}%
A\overset{.}{\subseteq}A$ and $%
\hat{T}%
A^{c}\overset{.}{\subseteq}A^{c}$ together imply $\lambda(A)\lambda(A^{c})=0$.
\end{theorem}

\begin{proof}
Since $A$ is invariant iff both $A$ and $A^{c}$ are forward invariant, it
suffices to prove (\ref{Eq_FwdInvarSets}). If $A\overset{.}{\subseteq}T^{-1}%
A$, then by (\ref{Eq_EssentialImageBabyProp_01_c}) and
(\ref{Eq_EssentialImageBabyProp_01_g}), $%
\hat{T}%
A\overset{.}{\subseteq}%
\hat{T}%
T^{-1}A\overset{.}{\subseteq}A$. But if $%
\hat{T}%
A\overset{.}{\subseteq}A$, then $A\overset{.}{\subseteq}T^{-1}%
\hat{T}%
A\overset{.}{\subseteq}T^{-1}A$ by (\ref{Eq_EssentialImageBabyProp_01_f}).
\end{proof}%

\vspace{0.2cm}%

Identifying (forward) invariant sets is a basic reduction step. To analyse the
behaviour of (forward) orbits of points from a given set $A\in\mathcal{A}$, we
have to study (at least) the smallest subsystem $\mathfrak{S}\mid_{Y} $ which
contains $A$ (mod $\lambda$). A na\"{\i}ve first look might suggest that any
suitable $Y\in\mathcal{A}$ must satisfy $\bigcup_{n\geq0}T^{n}A\overset
{.}{\subseteq}Y$. This is false (even for probability preserving maps and
measurable $T^{n}A$), but the corresponding assertion using essential images
is correct. Call $Y\in\mathcal{A}$ a \emph{(forward) invariant hull of}
$A\in\mathcal{A}$ if $Y$ is (forward) invariant with $A\overset{.}{\subseteq
}Y$, and if it is $\lambda$-minimal in that every (forward) invariant
$Z\in\mathcal{A}$ with $A\overset{.}{\subseteq}Z$ satisfies $Y\overset
{.}{\subseteq}Z$. It is immediate from the minimality condition in this
definition that the (forward) invariant hulls of $A$ form an equivalence class
under $\overset{.}{=}$. If a set which is only defined up to null-sets has
this property, we can justly call it \emph{the} (forward) invariant hull of
$A$. Be aware that, in general, the following is incorrect if we use $T^{n}A$
in place of $%
\hat{T}%
^{n}A$, even if $T$ has measurable images (consider the invariant set $A$ of
Example \ref{Ex_CSMC_A}).

\begin{theorem}
[\textbf{Invariant hulls via essential images}]Let $\mathfrak{S}%
=(X,\mathcal{A},\lambda,T)$ be a null-preserving system and $A\in\mathcal{A}$.
Then,
\begin{equation}
A^{\rightarrow}:=%
{\textstyle\bigcup\nolimits_{m\geq0}}
\hat{T}%
^{m}A\text{\quad is the forward invariant hull of }A\text{,}%
\label{Eq_FwInvarHull}%
\end{equation}
and
\begin{equation}
A^{\circlearrowleft}:=%
{\textstyle\bigcup\nolimits_{n\geq0}}
T^{-n}A^{\rightarrow}\text{\quad is the invariant hull of }A\text{.}%
\label{Eq_InvarHull}%
\end{equation}

\end{theorem}

\begin{proof}
We have $A^{\rightarrow}\in\mathcal{A}$ and $A\overset{.}{\subseteq
}A^{\rightarrow}$ by definition. Suppose that $A\overset{.}{\subseteq}H$ for
some forward-invariant set $H\in\mathcal{A}$, then $%
\hat{T}%
^{m}A\overset{.}{\subseteq}%
\hat{T}%
^{m}H\overset{.}{\subseteq}H$ for $m\geq0$, and hence $A^{\rightarrow}%
\overset{.}{\subseteq}H$. The set $A^{\rightarrow}$ itself is
forward-invariant: Using (\ref{Eq_EssentialImageBabyProp_01_f}) confirms that
$A^{\rightarrow}\overset{.}{\subseteq}%
{\textstyle\bigcup\nolimits_{m\geq0}}
T^{-1}%
\hat{T}%
^{m+1}A=T^{-1}%
{\textstyle\bigcup\nolimits_{m\geq1}}
\hat{T}%
^{m}A\subseteq T^{-1}A^{\rightarrow}$.

Evidently, $A^{\circlearrowleft}\in\mathcal{A}$ and $A\overset{.}{\subseteq
}A^{\circlearrowleft}$. Suppose that $A\overset{.}{\subseteq}H$ for some
invariant set $H\in\mathcal{A}$. By (i) we have $A^{\rightarrow}\overset
{.}{\subseteq}H$ and hence $T^{-n}A^{\rightarrow}\overset{.}{\subseteq}%
T^{-n}H$ for $n\geq0$, which implies $T^{-n}A^{\rightarrow}\overset
{.}{\subseteq}H$. Therefore, $A^{\circlearrowleft}=\bigcup_{n\geq0}%
T^{-n}A^{\rightarrow}\overset{.}{\subseteq}H$. Also, $T^{-1}%
A^{\circlearrowleft}\overset{.}{=}\bigcup_{n\geq1}T^{-n}A^{\rightarrow
}\overset{.}{=}A^{\circlearrowleft}$ because (i) shows that $A^{\rightarrow
}\overset{.}{\subseteq}T^{-1}A^{\rightarrow}$.
\end{proof}%

\vspace{0.2cm}%

It seems natural to call $A^{\rightarrow}$ the \emph{essential forward orbit
of} $A$, and $A^{\leftarrow}:=%
{\textstyle\bigcup\nolimits_{m\geq0}}
T^{-m}A$ the \emph{backward orbit of} $A$. Note that in general neither of
$A^{\rightarrow}$, $A^{\leftarrow}$ and $A^{\circlearrowleft}$ coincides with
the \emph{(two-sided) essential orbit of} $A$,%
\begin{equation}
A^{\leftrightarrow}:=%
{\textstyle\bigcup\nolimits_{n\geq0}}
T^{-n}A\cup%
{\textstyle\bigcup\nolimits_{n\geq1}}
\hat{T}%
^{n}A\text{,}\label{Eq_DefFullOrbitA}%
\end{equation}
a set which will be important in our discussion of dissipative systems below.
The latter is forward invariant since, by
(\ref{Eq_EssentialImageBabyProp_01_f}), $A^{\leftrightarrow}\overset{.}{=}%
{\textstyle\bigcup\nolimits_{n\geq1}}
T^{-n}A\cup%
{\textstyle\bigcup\nolimits_{n\geq1}}
\hat{T}%
^{n-1}A\overset{.}{\subseteq}%
{\textstyle\bigcup\nolimits_{n\geq1}}
T^{-n}A\cup%
{\textstyle\bigcup\nolimits_{n\geq1}}
T^{-1}%
\hat{T}%
^{n}A\overset{.}{=}T^{-1}A^{\leftrightarrow}$. Hence,%
\begin{equation}
A^{\leftrightarrow}\overset{.}{\subseteq}T^{-1}A^{\leftrightarrow}\text{
\qquad and \qquad}A^{\rightarrow}\overset{.}{\subseteq}A^{\leftrightarrow
}\overset{.}{\subseteq}A^{\circlearrowleft}\text{.}\label{Eq_PropFullOrbitA}%
\end{equation}
%

\vspace{0.2cm}%
%

\noindent
\textbf{Nonsingular sets and systems.} In the literature, the term
\emph{nonsingular} is used in different ways. As in \cite{A0} and \cite{DS} we
shall say that the null-preserving system $\mathfrak{S}$ is \emph{nonsingular}
if $\lambda\circ T^{-1}$ is equivalent to $\lambda$, $\lambda\circ
T^{-1}\simeq\lambda$, but we do not ask for invertibility. Call $A\in
\mathcal{A}$ a \emph{nonsinglar set} for $\mathfrak{S}$ if it is forward
invariant with $\mathfrak{S}\mid_{A}$ nonsingular. Intuitively, a
null-preserving system is nonsingular if the map $T$ is onto. But again, the
na\"{\i}ve interpretation $TX=X$ (mod $\lambda$) fails to characterize the
desired property, unless it is modified by using essential images.

\begin{example}
The system $(X,\mathcal{A},\lambda,T)$ of Example \ref{Ex_CSMC} b) is not
nonsingular, since $\lambda\circ T^{-1}(D)=0$ for $D:=I_{2}\times
(0,3]^{\mathbb{N}}$, while $\lambda(D)=1/3$. Nonetheless, $TX=X$ since
$TA_{s}=X$ for all $s\in(0,3]$.
\end{example}

In contrast, the corresponding statement for essential images is correct.

\begin{theorem}
[\textbf{Nonsingular sets via essential images}]\label{P_CharNonsingSys}Let
$\mathfrak{S}=(X,\mathcal{A},\lambda,T)$ be a null-preserving system and
$A\in\mathcal{A}$. Then,
\begin{equation}
A\text{ is nonsingular \quad iff \quad}%
\hat{T}%
A\overset{.}{=}A.\newline\label{Eq_CharNonsingSets}%
\end{equation}
In particular, $\mathfrak{S}$ is nonsingular iff $%
\hat{T}%
X\overset{.}{=}X$.
\end{theorem}

\begin{proof}
Under either condition, $A$ is forward invariant (use Theorem
\ref{Prop_BasicPropsViaEssentialImages}), so that $\lambda(A\cap
T^{-1}C)=\lambda(A\cap T^{-1}(A\cap C))=\lambda(T_{\mid A}^{-1}(A\cap C))$ for
every $C\in\mathcal{A}$. Therefore, the condition for $\mathfrak{S}_{\mid A}$
to be nonsingular, $\lambda_{\mid A}(C)>0$\ iff $\lambda(T_{\mid A}^{-1}(A\cap
C))>0$, is equivalent to $A$ being an essential image of $A$, $\lambda(A\cap
C)>0$\ iff $\lambda(A\cap T^{-1}C)>0$.
\end{proof}%

\vspace{0.2cm}%

Given a null-preserving system $\mathfrak{S}$ there is always a well-defined
maximal nonsingular set (possibly empty). We call the set $X_{\mathfrak{N}}$
in the next proposition the \emph{nonsingular part} of $X$, and $\mathfrak{S}%
\mid_{X_{\mathfrak{N}}}$ the \emph{nonsingular part} of $\mathfrak{S}$.

\begin{theorem}
[\textbf{The nonsingular part of} $\mathfrak{S}$]Let $\mathfrak{S}%
=(X,\mathcal{A},\lambda,T)$ be a null-preserving system. Then there exists a
nonsingular set $X_{\mathfrak{N}}\in\mathcal{A}$, unique (mod $\lambda$),
which is maximal in that $A\overset{.}{\subseteq}X_{\mathfrak{N}}$ for every
nonsingular $A\in\mathcal{A}$.
\end{theorem}

\begin{proof}
Uniqueness (mod $\lambda$) is immediate from the maximality condition. Passing
to an equivalent measure, we can assume w.l.o.g. that $\lambda(X)=1$. Define
$\mathcal{M}:=\{M\in\mathcal{A}:M$ nonsingular for $\mathfrak{S}\}$, then
$\varnothing\in\mathcal{M}$ and $\mathcal{M}$ is closed under countable
unions. Indeed, if $M_{1},M_{2},\ldots$ are nonsingular for $\mathfrak{S}$,
then (\ref{Eq_EssentialImageBabyProp_01_i}) shows that $M:=%
{\textstyle\bigcup\nolimits_{n\geq1}}
M_{n}$ satisfies $%
\hat{T}%
M\overset{.}{=}%
{\textstyle\bigcup\nolimits_{n\geq1}}
\hat{T}%
M_{n}\overset{.}{=}M$, and hence is nonsingular. Now let $s:=\sup
\{\lambda(M):M\in\mathcal{M}\}\in\lbrack0,1]$, then there are $M_{n}%
\in\mathcal{M} $ s.t. $\lambda(M_{n})\rightarrow s$. Define $X_{\mathfrak{N}%
}:=%
{\textstyle\bigcup\nolimits_{n\geq1}}
M_{n}$, then $X_{\mathfrak{N}}\in\mathcal{M}$, and since clearly
$\lambda(X_{\mathfrak{N}})\geq s$, this nonsingular set is maximal in the
required sense.
\end{proof}%

\vspace{0.2cm}%

Since $%
\hat{T}%
(%
{\textstyle\bigcap\nolimits_{n\geq0}}
\hat{T}%
^{n}X)\overset{.}{\subseteq}%
{\textstyle\bigcap\nolimits_{n\geq1}}
\hat{T}%
^{n}X\overset{.}{=}%
{\textstyle\bigcap\nolimits_{n\geq0}}
\hat{T}%
^{n}X$ by (\ref{Eq_EssentialImageBabyProp_01_j}) and monotonicity of $(%
\hat{T}%
^{n}X)_{n\geq0}$, we see that
\begin{equation}
X^{\cap}:=\bigcap_{n\geq0}%
\hat{T}%
^{n}X\text{ is forward invariant, \quad and \quad}X_{\mathfrak{N}}\overset
{.}{\subseteq}X^{\cap}\text{,}%
\end{equation}
because this intersection clearly contains any nonsingular set (recall
(\ref{Eq_CharNonsingSets})). But in general these two sets do not coincide:

\begin{example}
Let $X:=\{(m,n):1\leq m\leq n\}\cup\{(1,0),(0,0)\}\subseteq\mathbb{Z}^{2}$
equipped with counting measure $\lambda=\#$ on its power set $\mathcal{A}$ (or
any equivalent finite measure). Define a null-preserving map $T:X\rightarrow
X$ by $T(m,n):=(m-1,n)$ for $m>1$, while $T(1,n):=(1,0)$ and
$T(1,0):=T(0,0):=(0,0)$. For this system, $X^{\cap}=\{(1,0),(0,0)\}$ while
$X_{\mathfrak{N}}=\{(0,0)\}=%
\hat{T}%
X^{\cap}$.
\end{example}%

\vspace{0.2cm}%

\begin{remark}
[\textbf{Why study null-preserving rather than nonsingular maps?}]First, the
class of nonsingular systems is not as robust as that of null-preserving
systems. For instance, if $\mathfrak{S}$ is nonsingular, and $A$ a forward
invariant set, then $\mathfrak{S}\mid_{A}$ need not be nonsingular. (Take
$Tx:=x^{2}$ on $X:=[0,1]$ with Lebesgue measure, and $A:=[0,\eta]$ for some
$\eta\in(0,1)$.)

Another obvious reason is that an abstract theory of null-preserving systems
is more easily applied to concrete systems, since there are fewer conditions
to check, and since we do not have to identify the nonsingular part
$X_{\mathfrak{N}}$ to get started. The latter can be a nontrivial task, and
the set $X_{\mathfrak{N}}$ may be a more complicated and hence less convenient
space to work on\footnote{Some introductory texts pretend to focus on
nonsingular systems, but do discuss situations which fail to be nonsingular
(for example various non-surjective maps on an interval).}.
\end{remark}

The following observation regarding nonsingular systems will be useful later.

\begin{lemma}
[\textbf{Size of essential images under nonsingular maps}]\label{L_BigEssImg}%
Let $\mathfrak{S}=(X,\mathcal{A},\lambda,T)$ be a nonsingular system with
$\lambda(X)=1$. Then, for every $\varepsilon>0$ there is some $\delta>0$ such
that every $A\in\mathcal{A}$ with $\lambda(A)\geq1-\delta$ satisfies $\lambda(%
\hat{T}%
A)\geq1-\varepsilon$.
\end{lemma}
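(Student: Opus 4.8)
The plan is to avoid estimating $\lambda(\hat{T}A)$ (equivalently $\lambda(\hat{T}A^{c})$) on the nose, and instead to weigh the essential image with the image measure $\nu:=\lambda\circ T^{-1}$. The point worth flagging at the outset is that the naive attempt, namely bounding $\lambda(\hat{T}A^{c})$ directly in terms of $\lambda(A^{c})$, is hopeless: nothing in the hypotheses prevents the essential image of a $\lambda$-small set from having large $\lambda$-measure, since there is no bounded-multiplicity or bounded-Jacobian assumption. The one relation between $A$ and $\hat{T}A$ that we genuinely control is (\ref{Eq_EssentialImageBabyProp_01_f}), and it becomes a usable inequality only after applying $T^{-1}$ and then reading it off against $\nu$.

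Concretely, I would first record that $\nu$ is a probability measure, since $\nu(X)=\lambda(T^{-1}X)=\lambda(X)=1$. Next, starting from $A\overset{.}{\subseteq}T^{-1}\hat{T}A$ (this is (\ref{Eq_EssentialImageBabyProp_01_f})) and applying $\lambda$, I obtain
\[
\lambda(A)\leq\lambda(T^{-1}\hat{T}A)=\nu(\hat{T}A).
\]
Passing to complements inside the probability space $(X,\mathcal{A},\nu)$ then yields the clean estimate $\nu((\hat{T}A)^{c})=1-\nu(\hat{T}A)\leq 1-\lambda(A)$. In particular, whenever $\lambda(A)\geq 1-\delta$, the $\nu$-measure of the complement of the essential image is at most $\delta$.

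The final step is to convert this $\nu$-smallness back into $\lambda$-smallness, and this is exactly where nonsingularity enters: the assumption $\lambda\circ T^{-1}\simeq\lambda$ gives in particular $\lambda\ll\nu$. Since $\lambda$ is finite, the standard $\varepsilon$--$\delta$ reformulation of absolute continuity furnishes, for the given $\varepsilon>0$, some $\delta>0$ such that $\nu(E)\leq\delta$ implies $\lambda(E)\leq\varepsilon$ for every $E\in\mathcal{A}$. I would take this $\delta$ as the one in the statement; then for any $A$ with $\lambda(A)\geq 1-\delta$ the previous paragraph gives $\nu((\hat{T}A)^{c})\leq\delta$, whence $\lambda((\hat{T}A)^{c})\leq\varepsilon$, that is, $\lambda(\hat{T}A)\geq 1-\varepsilon$, as required.

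The only genuinely delicate point is the conceptual one: resisting the urge to estimate $\lambda(\hat{T}A)$ directly and instead measuring $\hat{T}A$ with $\nu$, under which (\ref{Eq_EssentialImageBabyProp_01_f}) turns into the one-line bound $\nu(\hat{T}A)\geq\lambda(A)$. After that the argument is just the routine absolute-continuity mechanism, and it uses neither the transfer-operator characterization, nor $\hat{T}X\overset{.}{=}X$, nor the countable-union identities of Proposition \ref{T_ElementaryPropsEssentialImage}.
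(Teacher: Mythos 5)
Your argument is correct and is essentially the paper's own proof: both hinge on $A\overset{.}{\subseteq}T^{-1}\hat{T}A$ to get $\lambda\circ T^{-1}((\hat{T}A)^{c})\leq 1-\lambda(A)$, and then invoke the $\varepsilon$--$\delta$ form of the absolute continuity $\lambda\ll\lambda\circ T^{-1}$ supplied by nonsingularity. Naming the image measure $\nu$ and phrasing the key step as $\nu(\hat{T}A)\geq\lambda(A)$ is only a cosmetic repackaging of the same computation.
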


\begin{proof}
We have $\lambda\ll\lambda\circ T^{-1}$, so that by standard measure theory we
can find, for any $\varepsilon>0$, some $\delta>0$ such that $\lambda\circ
T^{-1}(B)<\delta$ implies $\lambda(B)<\varepsilon$ for $B\in\mathcal{A}$. Now
take any $A\in\mathcal{A}$ with $\lambda(A)\geq1-\delta$. Then $\lambda(T^{-1}%
\hat{T}%
A)\geq1-\delta$ by (\ref{Eq_EssentialImageBabyProp_01_f}), and hence
$\lambda\circ T^{-1}((%
\hat{T}%
A)^{c})=\lambda((T^{-1}%
\hat{T}%
A)^{c})<\delta$. Consequently, $1-\lambda(%
\hat{T}%
A)=\lambda((%
\hat{T}%
A)^{c})<\varepsilon$ as required.
\end{proof}%

\vspace{0.2cm}%
%

\noindent
\textbf{Recurrence properties.} Let $\mathfrak{S}=(X,\mathcal{A},\lambda,T)$
be a null-preserving system. Basic standard notions describe recurrence
properties of individual sets. Call $W\in\mathcal{A}$ a \emph{wandering set}
if $W\cap T^{-n}W\overset{.}{=}\varnothing$ for $n\geq1$. In contrast,
$A\in\mathcal{A}$ is a \emph{recurrent set} if $A\overset{.}{\subseteq}%
{\textstyle\bigcup\nolimits_{n\geq1}}
T^{-n}A$. A routine argument shows that a recurrent set $A$ is automatically
an \emph{infinitely recurrent set} in that $A\overset{.}{\subseteq}%
\,\overline{\lim}_{n\rightarrow\infty}T^{-n}A$.

Turning to essential images, recall that by
(\ref{Eq_EssentialImageBabyProp_01_h}) we have $%
\hat{T}%
^{n}W\cap W\overset{.}{=}%
\hat{T}%
^{n}(W\cap T^{-n}W)$ for $n\geq1$, where due to
(\ref{Eq_PositivityPreservation}) the right-hand set is null iff $W\cap
T^{-n}W$ is. Therefore,
\begin{equation}
W\text{ is a wandering set \quad iff \quad}W\cap%
\hat{T}%
^{n}W\overset{.}{=}\varnothing\text{ for }n\geq1.\label{Eq_CharWanderingSets}%
\end{equation}
In this case $T^{-1}W$ is also wandering, but $%
\hat{T}%
W$ need not be:

\begin{example}
Take $X:=\mathbb{Z}$ with $\mathcal{A}$ its power set and $\mu(\{x\}):=2$ for
$x\leq0$ while $\mu(\{x\}):=1$ for $x>0$. Let $Tx:=x-1$ for $x\leq1$ and
$Tx:=x-2$ for $x>1$. Then $(X,\mathcal{A},\mu,T)$ is totally dissipative,
measure preserving and ergodic. Here $W:=\{2,3\}$ is a wandering set with
$X=W^{\leftrightarrow}$, but $%
\hat{T}%
^{n}W\cap%
\hat{T}%
^{n+1}W=\{n-1\}$ has positive measure for $n\geq1$, and so has $%
\hat{T}%
W\cap T^{-1}%
\hat{T}%
W=\{1\}$.
\end{example}

Note next that the ad-hoc attempt to characterize recurrence (or infinite
recurrence) of a set $A$ via $A\overset{.}{\subseteq}\,%
{\textstyle\bigcup\nolimits_{n\geq1}}
\hat{T}%
^{n}A$ (or $A\overset{.}{\subseteq}\,\overline{\lim}_{n\rightarrow\infty}%
\hat{T}%
^{n}A$) is misguided:

\begin{example}
Take $Tx:=2x$ on $[0,\infty)$ with Lebesgue measure, then any bounded
neighborhood $A$ of $x=0$ satisfies $A\overset{.}{\subseteq}\,%
\hat{T}%
^{n}A$ for $n\geq n_{0}(A)$ without being recurrent. (Since the system is
invertible, this is not a question of how to interpret $T^{n}A$.)
\end{example}%

\vspace{0.2cm}%

Nonetheless, one can characterize recurrence of the whole\ system in terms of
essential images. Recall that $\mathfrak{S}$ is said to be \emph{conservative}
if $\lambda(W)=0$ for each of its wandering sets. By a classical result (e.g.
Theorem 2.3.4 of \cite{Petersen}), this is equivalent to every $A\in
\mathcal{A}$ being an (infinitely) recurrent set, and also to $\mathfrak{S}$
being \emph{incompressible}, meaning that $T^{-1}B\overset{.}{\subseteq}B$
implies $T^{-1}B\overset{.}{=}B$ for all $B\in\mathcal{A}$. (Passing to
complements, this is equivalent to saying that every forward-invariant set is
invariant.) Here is a dual version of this theorem.

\begin{theorem}
[\textbf{Recurrence properties of }$\mathfrak{S}$\textbf{\ via essential
images}]\label{P_Rec}Let $\mathfrak{S}=(X,\mathcal{A},\lambda,T)$ be a
null-preserving system. Then the following are equivalent:\newline%
\newline\textbf{(i)} $\mathfrak{S}$ is conservative;\newline\newline%
\textbf{(ii)} for every $A\in\mathcal{A}$ we have $A\overset{.}{\subseteq}\,%
{\textstyle\bigcup\nolimits_{n\geq1}}
\hat{T}%
^{n}A$;\newline\newline\textbf{(iii)} for every $A\in\mathcal{A}$ we have
$A\overset{.}{\subseteq}\,\overline{\lim}_{n\rightarrow\infty}%
\hat{T}%
^{n}A$;\newline\newline\textbf{(iv)} for every $A\in\mathcal{A}$ with $%
\hat{T}%
A\overset{.}{\subseteq}A$ we have $%
\hat{T}%
A^{c}\overset{.}{\subseteq}A^{c}$.\newline\newline In this case,
$\mathfrak{S}$ is also nonsingular.
\end{theorem}

\begin{proof}
Obviously, (iii) implies (ii). Next, we check that (ii) entails (i): Asuming
(ii) we see that for every $A\in\mathcal{A}$ with $\lambda(A)>0$ there is some
$n\geq1$ for which $\lambda(A\cap%
\hat{T}%
^{n}A)>0$. In view of (\ref{Eq_CharWanderingSets}) this means that $A$ cannot
be a wandering set, and we conclude that $\mathfrak{S}$ is conservative.

We now show that (i)\ implies (iii). Fix any $A\in\mathcal{A}$. Suppose that
$\lambda(A)>0$ (otherwise the condition in (iii) is trivially satisfied).
Assume first that we also have $\lambda(A)<\infty$, and hence $1_{A}%
\in\mathcal{L}_{1}(\lambda)$. According to classical results (Proposition
1.3.1 in \cite{A0}), $\sum_{n\geq1}\widehat{T}^{n}1_{A}=\infty$ a.e. on $A$.
But since each $\widehat{T}^{n}1_{A}$ is in $\mathcal{L}_{1}(\lambda)$ and
hence real-valued a.e., there is a null-set outside of which the series can
only diverge at $x$ if $x\in\{\widehat{T}^{n}1_{A}>0\}=%
\hat{T}%
^{n}A$ for infinitely many $n$.\ We can thus conclude that $A\overset
{.}{\subseteq}\,\overline{\lim}_{n\rightarrow\infty}%
\hat{T}%
^{n}A$ whenever $\lambda(A)<\infty$. The general set $A\in\mathcal{A}$ can be
represented as $A=%
{\textstyle\bigcup\nolimits_{j\geq1}}
A_{j}$ with $\lambda(A_{j})<\infty$. Apply the above to each $A_{j}$ to see
that again $A\overset{.}{\subseteq}%
{\textstyle\bigcup\nolimits_{j\geq1}}
\,\overline{\lim}_{n\rightarrow\infty}%
\hat{T}%
^{n}A_{j}\overset{.}{\subseteq}\,\overline{\lim}_{n\rightarrow\infty}%
\hat{T}%
^{n}A$.

To see that (i) is equivalent to (iv), recall that conservativity is
equivalent to incompressibility. Observe then that the two conditions
$T^{-1}B\overset{.}{\subseteq}B$ and $B\overset{.}{\subseteq}T^{-1}B$ which
appear in the definition of the latter property translate into $%
\hat{T}%
B^{c}\overset{.}{\subseteq}B^{c}$ and $%
\hat{T}%
B\overset{.}{\subseteq}B$, respectively, and set $A:=B^{c}$.

Finally, assume conservativity. Then $%
\hat{T}%
X\overset{.}{\subseteq}X\overset{.}{\subseteq}%
{\textstyle\bigcup\nolimits_{n\geq1}}
\hat{T}%
(%
\hat{T}%
^{n-1}X)\overset{.}{\subseteq}X$, by (ii) and since $%
\hat{T}%
^{j}X\overset{.}{\subseteq}X$ for $j\geq0$. Hence $\mathfrak{S}$ is
nonsingular by Theorem \ref{P_CharNonsingSys}.
\end{proof}%

\vspace{0.2cm}%

For a conservative system, ergodicity means that any positive measure set can
be reached from any other positive measure set, in a sense which can again be
made precise using essential images.

\begin{theorem}
[\textbf{Conservative ergodic systems via essential images}]%
\label{P_CEsystems}Let $\mathfrak{S}=(X,\mathcal{A},\lambda,T)$ be a
null-preserving system. Then the following are equivalent:\newline%
\newline\textbf{(i)} $\mathfrak{S}$ is conservative and ergodic;\newline%
\newline\textbf{(ii)} for every $A\in\mathcal{A}$ with $\lambda(A)>0$ we have
$X\overset{.}{=}\,%
{\textstyle\bigcup\nolimits_{n\geq1}}
\hat{T}%
^{n}A$;\newline\newline\textbf{(iii)} for every $A\in\mathcal{A}$ with
$\lambda(A)>0$ we have $X\overset{.}{=}\,\overline{\lim}_{n\rightarrow\infty}%
\hat{T}%
^{n}A$;\newline\newline\textbf{(iv)} for every $A\in\mathcal{A}$ with
$\lambda(A)>0$ and $%
\hat{T}%
A\overset{.}{\subseteq}A$ we have $X\overset{.}{=}A$.\newline
\end{theorem}

\begin{proof}
Obviously, (iii) implies (ii) since $\overline{\lim}_{n\rightarrow\infty}%
\hat{T}%
^{n}A\overset{.}{\subseteq}%
{\textstyle\bigcup\nolimits_{n\geq1}}
\hat{T}%
^{n}A$. Next, we check that (ii) entails (i): Asuming (ii) we see that
$X\overset{.}{=}\,%
{\textstyle\bigcup\nolimits_{n\geq1}}
\hat{T}%
^{n}A\overset{.}{\subseteq}A$ for every forward-invariant $A\in\mathcal{A}$
with $\lambda(A)>0$. This immediately gives ergodicity, and incompressibility
in the form of property (iv) of Theorem \ref{P_Rec}. Therefore $\mathfrak{S}$
is also conservative.

We now show that (i)\ implies (iii). Take any $A\in\mathcal{A}$ with
$\lambda(A)>0$. By Theorem \ref{P_Rec}, $A\overset{.}{\subseteq}%
B:=\,\overline{\lim}_{n\rightarrow\infty}%
\hat{T}%
^{n}A\overset{.}{=}%
{\textstyle\bigcap\nolimits_{m\geq1}}
{\textstyle\bigcup\nolimits_{n\geq m}}
\hat{T}%
^{n}A$. Now $%
\hat{T}%
B\overset{.}{\subseteq}%
{\textstyle\bigcap\nolimits_{m\geq2}}
{\textstyle\bigcup\nolimits_{n\geq m}}
\hat{T}%
^{n}A\overset{.}{=}B$ by (\ref{Eq_EssentialImageBabyProp_01_j}) and
(\ref{Eq_EssentialImageBabyProp_01_i}), and Theorem \ref{P_Rec} also shows
that every forward invariant set is invariant, so that $B\overset{.}{=}%
T^{-1}B$. Due to ergodicity, this shows that $B\overset{.}{=}X$.

Finally, equivalence of (i) and (iv) is immediate from the definition of
ergodicity and property (iv) in Theorem \ref{P_Rec}.
\end{proof}%

\vspace{0.2cm}%
%

\noindent
\textbf{Totally dissipative systems.} Recall that $(X,\mathcal{A},\lambda,T) $
is said to be \emph{totally dissipative} if $X$ can be represented as a
countable union of wandering sets. It is well known (see Theorem 13.1 of
\cite{HopfBook} or Proposition 1.1.2 of \cite{A0}) that in the invertible case
this can be improved in that $X$ is actually the full orbit of a single
wandering set, $X\overset{.}{=}%
{\textstyle\bigcup\nolimits_{n\in\mathbb{Z}}}
T^{n}W$. Dropping the assumption of invertibility, Theorem 3 of \cite{Helm0}
shows that this remains true as long as $T$ has measurable images (while the
question whether $X\overset{.}{=}%
{\textstyle\bigcup\nolimits_{n\in\mathbb{Z}}}
T^{n}W$ holds is ill-posed otherwise). We are going show that using essential
images it is always possible to express $X$ as the full orbit
$W^{\leftrightarrow}$ (recall (\ref{Eq_DefFullOrbitA})) of a single wandering
set $W$.

\begin{theorem}
[\textbf{Totally dissipative systems are essential orbits}]%
\label{P_TotalDissFullOrbit}Let $\mathfrak{S}=(X,\mathcal{A},\lambda,T)$ be a
totally dissipative null-preserving system and $V$ a wandering set. Then there
exists another wandering set $W$ containing $V$ for which
\begin{equation}
X\overset{.}{=}W^{\leftrightarrow}\overset{.}{=}\bigcup_{n\in\mathbb{Z}}%
W_{n}\text{,}%
\end{equation}
with measurable sets $W_{n}:=%
\hat{T}%
^{n}W$, $n\geq0$, and $W_{-n}:=T^{-n}W$, $n\geq1$, satisfying%
\begin{equation}%
\hat{T}%
^{m}W_{n}\overset{.}{\subseteq}W_{n+m}\text{ \quad and \quad}W_{n-m}%
\overset{.}{\subseteq}T^{-m}W_{n}\text{ \quad for }m\geq0,n\in\mathbb{Z}%
\text{.}\label{Eq_Helmi1}%
\end{equation}
If $\mathfrak{S}$ is nonsingular, then the first of these can be sharpened to%
\begin{equation}%
\hat{T}%
^{m}W_{n}\overset{.}{=}W_{n+m}\text{ \quad for }m\geq0,n\in\mathbb{Z}%
\text{.}\label{Eq_Helmi2}%
\end{equation}

\end{theorem}

Note that $W_{-n}$ may be null for $n\geq n_{0}$. To establish the theorem we
shall use

\begin{lemma}
[\textbf{On wandering sets}]Let $\mathfrak{S}=(X,\mathcal{A},\lambda,T)$ be
null-preserving.\newline\newline\textbf{(i)} If $U,V$ are wandering sets, then
so is $W:=U\cup(V\setminus U^{\leftrightarrow})$.\newline\newline\textbf{(ii)}
If $(W_{k})_{k\geq1}$ is a sequence of wandering sets with $W_{k}\overset
{.}{\subseteq}W_{k+1}$ for $k\geq1$, then $W:=%
{\textstyle\bigcup\nolimits_{k\geq1}}
W_{k}$ is a wandering set.
\end{lemma}

\begin{proof}
\textbf{(i)} Take any $n\geq1$. Since $U\cap T^{-n}U\overset{.}{=}V\cap
T^{-n}V\overset{.}{=}\varnothing$, we have
\[
W\cap T^{-n}W\overset{.}{=}\left(  U\cap T^{-n}(V\setminus U^{\leftrightarrow
})\right)  \cup\left(  (V\setminus U^{\leftrightarrow})\cap T^{-n}U\right)
\text{.}%
\]
Due to (\ref{Eq_PropFullOrbitA}), $T^{-n}(V\setminus U^{\leftrightarrow
})\overset{.}{\subseteq}(T^{-n}U^{\leftrightarrow})^{c}\overset{.}{\subseteq
}(U^{\leftrightarrow})^{c}$, so that $U\cap T^{-n}(V\setminus
U^{\leftrightarrow})\overset{.}{\subseteq}U\cap(U^{\leftrightarrow}%
)^{c}\overset{.}{=}\varnothing$. On the other hand, $(V\setminus
U^{\leftrightarrow})\cap T^{-n}U\overset{.}{\subseteq}(U^{\leftrightarrow
})^{c}\cap T^{-n}U\overset{.}{=}\varnothing$. Together these show that $W\cap
T^{-n}W\overset{.}{=}\varnothing$.

\textbf{(ii)} Fix any $n\geq1$. Since $W_{k}\overset{.}{\nearrow}W$ and
(hence) $T^{-n}W_{k}\overset{.}{\nearrow}T^{-n}W$ as $k\rightarrow\infty$, we
see that $\varnothing\overset{.}{=}W_{k}\cap T^{-n}W_{k}\overset{.}{\nearrow
}W\cap T^{-n}W$ and therefore $W\cap T^{-n}W\overset{.}{=}\varnothing$.
\end{proof}%

\vspace{0.2cm}%

\begin{proof}
[\textbf{Proof of Theorem \ref{P_TotalDissFullOrbit}}]Starting from wandering
sets $V_{k}$ with $X\overset{.}{=}\bigcup_{k\geq1}V_{k}$, w.l.o.g. with
$V_{1}=V$, we let $W_{1}:=V_{1}$ and $W_{k+1}:=W_{k}\cup(V_{k+1}\setminus
W_{k}^{\leftrightarrow})$ for $k\geq1$. By part (i) of the Lemma, the $W_{k}$
are wandering, and since $V_{k+1}\overset{.}{\subseteq}W_{k}^{\leftrightarrow
}\cup(V_{k+1}\setminus W_{k}^{\leftrightarrow})$, we see that
\begin{equation}
V_{k+1}\overset{.}{\subseteq}W_{k+1}^{\leftrightarrow}\text{ \qquad for }%
k\geq0\text{.}\label{Eq_Helmberg}%
\end{equation}
Now $(W_{k})$ is a non-decreasing sequence, and part (ii) of the Lemma shows
that $W:=%
{\textstyle\bigcup\nolimits_{k\geq1}}
W_{k}$ is a wandering set. In view of (\ref{Eq_Helmberg}), however,
$V_{k}\overset{.}{\subseteq}W^{\leftrightarrow}$ for $k\geq1$, and hence
$X\overset{.}{\subseteq}W^{\leftrightarrow}$.

To prove the first statement in (\ref{Eq_Helmi1}) and (\ref{Eq_Helmi2}), start
by observing that $%
\hat{T}%
^{m}W_{n}\overset{.}{=}W_{n+m}$ for $m\geq0$ is trivial in case $n\geq0$.
Assume therefore that $n=-k<0$. If $0\leq m\leq k$, then
(\ref{Eq_EssentialImageBabyProp_01_g}) gives $%
\hat{T}%
^{m}W_{n}\overset{.}{=}%
\hat{T}%
^{m}T^{-m}(T^{-(k-m)}W)\overset{.}{\subseteq}T^{-(k-m)}W=W_{n+m}$ with
essential equality $\overset{.}{=}$ whenever $\mathfrak{S}$ is nonsingular, $%
\hat{T}%
X\overset{.}{=}X$. Likewise, if $m>k$, then
(\ref{Eq_EssentialImageBabyProp_01_g}) and
(\ref{Eq_EssentialImageBabyProp_01_c}) show that $%
\hat{T}%
^{m}W_{n}\overset{.}{=}%
\hat{T}%
^{m-k}(%
\hat{T}%
^{k}T^{-k}W)\overset{.}{\subseteq}%
\hat{T}%
^{m-k}W=W_{n+m}$ with essential equality $\overset{.}{=}$ whenever
$\mathfrak{S}$ is nonsingular.

\textbf{(}Regarding the second statement in (\ref{Eq_Helmi1}), note first that
in case $n\leq0$ we trivially have $W_{n-m}\overset{.}{=}T^{-m}W_{n}$. Assume
now that $n\geq1$. If $0\leq m<n$, then (\ref{Eq_EssentialImageBabyProp_01_f})
guarantees that $W_{n-m}\overset{.}{=}%
\hat{T}%
^{n-m}W\overset{.}{\subseteq}T^{-m}%
\hat{T}%
^{m}(%
\hat{T}%
^{n-m}W)\overset{.}{=}T^{-m}W_{n}$. Similarly, if $m\geq n$, then
(\ref{Eq_EssentialImageBabyProp_01_f}) and
(\ref{Eq_EssentialImageBabyProp_01_c}) yield $W_{n-m}\overset{.}{=}%
T^{-(m-n)}W\overset{.}{\subseteq}T^{-(m-n)}(T^{-n}%
\hat{T}%
^{n}W)\overset{.}{=}T^{-m}%
\hat{T}%
^{n}W\overset{.}{=}T^{-m}W_{n}$ as claimed.
\end{proof}%

\vspace{0.2cm}%

\section{The tail-$\sigma$-algebra and exactness\label{Sec_Basic2}}%

\noindent
\textbf{Sets which remain separated. Corridors.} Identifying an invariant set
$A$ of a null-preserving system $\mathfrak{S}=(X,\mathcal{A},\lambda,T)$
reveals a basic aspect of its global structure and allows us to predict that
for a.e. $x\in A$ and $y\in A^{c}$ the images $T^{n}x$ and $T^{n}y$ will
belong to the disjoint sets $A$ and $A^{c}$ at all times $n$.

To capture a general situation in which predictions of this flavour are
possible we shall, for $A,B\in\mathcal{A}$, say that $B$ \emph{remains
separated from} $A$ (or simply that $A$ and $B$ \emph{remain separated}) if
for every $n\geq0$ there is some set $A_{n}\in\mathcal{A}$ such that
$A\overset{.}{\subseteq}T^{-n}A_{n}$ and $B\overset{.}{\subseteq}T^{-n}%
A_{n}^{c}$ so that, after $n$ steps, a.e. point of $A$ gets mapped into
$A_{n}$, while a.e. point of $B$ is mapped into $A_{n}^{c}$. Using essential
images, we can express this very neatly, since
(\ref{Eq_EssentialImageBabyProp_01_w2}) implies that
\begin{equation}
A\text{ and }B\text{ remain separated \quad iff \quad}%
\hat{T}%
^{n}A\cap%
\hat{T}%
^{n}B\overset{.}{=}\varnothing\text{ for }n\geq0
\end{equation}
(a characterization which fails if we use ordinary images $T^{n}A$ and
$T^{n}B$\ instead of essential ones, see Example \ref{Ex_CSMC_A}). A special
case of the above occurs when $A\overset{.}{=}T^{-n}A_{n}$ and $B:=A^{c}%
\overset{.}{=}T^{-n}A_{n}^{c}$ for $n\geq0$. We call $(A_{n})_{n\geq0}$ a
\emph{corridor} with \emph{entrance} $A$ (or \emph{for} $A$) in this situation.%

\vspace{0.2cm}%
%

\noindent
\textbf{Tail-}$\sigma$\textbf{-algebra and tail-sets.} The \emph{tail}
$\sigma$\emph{-algebra} of a null-preserving system $\mathfrak{S}%
=(X,\mathcal{A},\lambda,T)$ is $\mathfrak{T}(\mathfrak{S}):=\{A\in
\mathcal{A}:A\overset{.}{=}B$ for some $B\in%
{\textstyle\bigcap\nolimits_{n\geq0}}
T^{-n}\mathcal{A}\}$. Its elements are the \emph{tail sets} of $\mathfrak{S}$.
This is a classical concept, first introduced in \cite{Rokhlin}. It is
sometimes regarded the least intuitive of the concepts discussed here, but it
is easy to grasp the dynamical signficance of tail sets via the concepts just
introduced. Be aware that, in general, characterizations (iii) and (iv) below
are incorrect if we use $T^{n}A$ in place of $%
\hat{T}%
^{n}A$, even if $T$ has measurable images (Example \ref{Ex_CSMC_A} again).

\begin{theorem}
[\textbf{Tail sets, corridors and essential images}]\label{P_TCE}Assume that
$\mathfrak{S}=(X,\mathcal{A},\lambda,T)$ is null-preserving and $A\in
\mathcal{A}$. Then the following are equivalent:\newline\newline\textbf{(i)}
$A$ is a tail set;\newline\newline\textbf{(ii)} $A$ is the entrance to some
corridor;\newline\newline\textbf{(iii)} $A$ satisfies $A\overset{.}{=}T^{-n}%
\hat{T}%
^{n}A$ for $n\geq0$;\newline\newline\textbf{(iv)} $A$ and $A^{c}$ remain
separated.\newline\newline In this case, a sequence $(A_{n})_{n\geq0}$ in
$\mathcal{A}$ is a corridor with entrance $A$ iff%
\begin{equation}%
\hat{T}%
^{n}A\overset{.}{\subseteq}A_{n}\overset{.}{\subseteq}%
\hat{T}%
^{n}A\cup(%
\hat{T}%
^{n}X)^{c}\text{ \quad for }n\geq0\text{.}\label{Eq_xvsfwesvcccca}%
\end{equation}
In particular, $(%
\hat{T}%
^{n}A)_{n\geq0}$ and $(%
\hat{T}%
^{n}A\cup(%
\hat{T}%
^{n}X)^{c})_{n\geq0}$ are the smallest and the largest (mod $\lambda$)
corridor with entrance $A$, respectively. \
\end{theorem}

\begin{proof}
(i) implies (iv): Suppose that $A$ is a tail set, $A\overset{.}{=}B$ with
$B\in\mathfrak{T}(\mathfrak{S})$. By definition of the tail-$\sigma$-algebra,
there are $B_{n}\in\mathcal{A}$ such that $B=T^{-n}B_{n}$ for $n\geq0$. Hence,
$A\overset{.}{=}T^{-n}B_{n}$ and therefore also $A^{c}\overset{.}{=}%
T^{-n}B_{n}^{c}$. According to (\ref{Eq_EssentialImageBabyProp_01_e}) these
imply $%
\hat{T}%
^{n}A\overset{.}{\subseteq}B_{n}$ and $%
\hat{T}%
^{n}A^{c}\overset{.}{\subseteq}B_{n}^{c}$, so that $%
\hat{T}%
^{n}A\cap%
\hat{T}%
^{n}A^{c}\overset{.}{=}\varnothing$ for all $n\geq0$.

(iv) implies (iii): By (\ref{Eq_EssentialImageBabyProp_01_f}) it is clear that
$A\overset{.}{\subseteq}T^{-n}%
\hat{T}%
^{n}A$. On the other hand, using (\ref{Eq_EssentialImageBabyProp_01_f}) and
(iii) we see that $A^{c}\overset{.}{\subseteq}T^{-n}%
\hat{T}%
^{n}A^{c}$, and therefore $A^{c}\cap T^{-n}%
\hat{T}%
^{n}A\overset{.}{\subseteq}T^{-n}%
\hat{T}%
^{n}A^{c}\cap T^{-n}%
\hat{T}%
^{n}A\overset{.}{=}T^{-1}(%
\hat{T}%
^{n}A^{c}\cap%
\hat{T}%
^{n}A)\overset{.}{=}\varnothing$, so that indeed $A\overset{.}{=}T^{-n}%
\hat{T}%
^{n}A$.

(iii) implies (ii) since $A_{n}:=T^{-n}%
\hat{T}%
^{n}A$ obviously defines a corridor.

(ii) implies (i): If $(A_{n})_{n\geq0}$ in $\mathcal{A}$ is a corridor with
entrance $A$, we define another sequence $(B_{n})$ in $\mathcal{A}$ by letting
$B_{n}:=%
{\textstyle\bigcap\nolimits_{l\geq1}}
{\textstyle\bigcup\nolimits_{m\geq l}}
T^{-m}A_{m+n}$, $n\geq0$. It is immediate that $B:=B_{0}=T^{-n}B_{n}$ for all
$n$, and thus $B\in\mathfrak{T}(\mathfrak{S})$. By assumption, $T^{-m}%
A_{m}\overset{.}{=}A$ for all $m\geq0$, and therefore $%
{\textstyle\bigcup\nolimits_{m\geq l}}
T^{-m}A_{m}\overset{.}{=}A$ for all $l\geq1$, which entails $B\overset{.}{=}%
A$. Hence $A$ is a tail set.

Assume now that $A$ is a tail set. Suppose, in addition, that $(A_{n}%
)_{n\geq0}$ is a corridor with entrance $A$. In view of
(\ref{Eq_EssentialImageBabyProp_01_g}), the defining condition $A\overset
{.}{=}T^{-n}A_{n}$ of the corridor implies $%
\hat{T}%
^{n}A\overset{.}{=}A_{n}\cap%
\hat{T}%
^{n}X$ for $n\geq0$ and hence (\ref{Eq_xvsfwesvcccca}).

Conversely, suppose that $(A_{n})$ satisfies (\ref{Eq_xvsfwesvcccca}). Setting
$B_{n}:=%
\hat{T}%
^{n}A$ we have $A\overset{.}{\subseteq}T^{-n}B_{n}\overset{.}{\subseteq}%
T^{-n}A_{n}$ for $n\geq0$ by (\ref{Eq_EssentialImageBabyProp_01_f}) and
(\ref{Eq_xvsfwesvcccca}). On the other hand, (\ref{Eq_xvsfwesvcccca})
guarantees that the sets $C_{n}:=%
\hat{T}%
^{n}A\cup(%
\hat{T}%
^{n}X)^{c}$ satisfy $T^{-n}A_{n}\overset{.}{\subseteq}T^{-n}C_{n}\overset
{.}{\subseteq}T^{-n}%
\hat{T}%
^{n}A\cup(T^{-n}%
\hat{T}%
^{n}X)^{c}$. Here, $(T^{-n}%
\hat{T}%
^{n}X)^{c}\overset{.}{=}\varnothing$ by (\ref{Eq_EssentialImageBabyProp_01_x}%
), and $T^{-n}%
\hat{T}%
^{n}A\overset{.}{=}A$ as remarked before. Therefore, $T^{-n}A_{n}\overset
{.}{\subseteq}A$ for $n\geq0$, and $(A_{n})$ is a corridor.
\end{proof}

\bigskip%
\vspace{0.2cm}%

It is immediate from the definition of a corridor that, for $m\geq1$,
\begin{equation}
\text{if }(A_{n})_{n\geq0}\text{ is a corridor, then so are }(A_{n}%
^{c})_{n\geq0}\text{ and }(T^{-m}A_{n})_{n\geq0}\text{.}%
\label{EQ_klklkklcotrrrridooorrrs}%
\end{equation}
There is a similar statement regarding (essential) forward images if the
system is nonsingular rather than just null-preserving.

\begin{theorem}
[\textbf{Tail sets and corridors of nonsingular systems}]Assume that
$\mathfrak{S}=(X,\mathcal{A},\lambda,T)$ is nonsingular and that
$A\in\mathcal{A}$ is a tail set. Then, \newline\newline\textbf{(i)} a sequence
$(A_{n})_{n\geq0}$ is a corridor with entrance $A$ iff $A_{n}\overset{.}{=}%
\hat{T}%
^{n}A$ for $n\geq0$,\newline\newline\textbf{(ii)} the essential image $%
\hat{T}%
A$ is a tail set with corridor $(%
\hat{T}%
^{n+1}A)_{n\geq0}$.
\end{theorem}

\begin{proof}
Statement (i) follows at once from the characterization
(\ref{Eq_xvsfwesvcccca}) of coridors, since $(%
\hat{T}%
^{n}X)^{c}\overset{.}{=}\varnothing$ for $n\geq0$ in the nonsingular case.

Turning to (ii), let $B:=%
\hat{T}%
A$. Since $\mathfrak{S}$ is nonsingular, we have $%
\hat{T}%
A\cup%
\hat{T}%
A^{c}\overset{.}{=}%
\hat{T}%
X\overset{.}{=}X$ by (\ref{Eq_EssentialImageBabyProp_01_i}). Equivalently, $(%
\hat{T}%
A)^{c}\cap(%
\hat{T}%
A^{c})^{c}\overset{.}{=}\varnothing$, so that $B^{c}\overset{.}{\subseteq}%
\hat{T}%
A^{c}$. But since $A$ fulfils condition (iv) of Theorem \ref{P_TCE}, we then
find that
\[%
\hat{T}%
^{n}B\cap%
\hat{T}%
^{n}B^{c}\overset{.}{\subseteq}%
\hat{T}%
^{n+1}A\cap%
\hat{T}%
^{n+1}A^{c}\overset{.}{=}\varnothing\text{ \quad for }n\geq0\text{,}%
\]
and applying Theorem \ref{P_TCE} to $B$ we see that the latter is indeed a
tail set. The explicit form of the corridor(s) follows from assertion (ii).
\end{proof}%

\vspace{0.2cm}%

Note that these fail if we drop the assumption that $\mathfrak{S}$ is nonsingular:

\begin{example}
Let $X:=\{0,1\}$, $\mathcal{A}$ its power set, and $\lambda:=\#$ (counting
measure). Then $Tx:=0$ defines a null-preserving map on $(X,\mathcal{A}%
,\lambda)$. Trivially, $A:=X$ is a tail set, but $B:=\{0\}=TA\overset{.}{=}%
\hat{T}%
A$ is not, since there is no $C\in\mathcal{A}$ for which $B\overset{.}%
{=}T^{-1}C$. Note that $B$ is the only version of $%
\hat{T}%
A$, hence $%
\hat{T}%
A$ is not a tail set. We also see that the sequence $(A_{n})_{n\geq0}$ with
$A_{0}:=A$ and $A_{n}:=B$ for $n\geq1$ is a corridor, while $(A_{n+1}%
)_{n\geq0}$ is not (because $A_{1}$ is not a tail set).
\end{example}%

\vspace{0.2cm}%

What is the information that an initial point belongs to $A\in\mathcal{A}$
worth in terms of set separation? To answer this, we need to identify the
largest (mod $\lambda$) set $B\in\mathcal{A}$ which remains separated from
$A$. Call $Y\in\mathcal{A}$ a \emph{tail-measurable hull of} $A\in\mathcal{A}$
(or simply a \emph{tail of} $A$) if $Y$ is a tail set with $A\overset
{.}{\subseteq}Y$, and if it is minimal in that every tail set $Z\in
\mathcal{A}$ with $A\overset{.}{\subseteq}Z$ satisfies $Y\overset{.}%
{\subseteq}Z$. It is immediate from the minimality condition in this
definition that the tail-measurable hulls of $A$ form an equivalence class
under $\overset{.}{=}$. Be aware that, in general, assertion (i) below is
false if we use $T^{m}A$ in place of $%
\hat{T}%
^{m}A$, even if $T$ has measurable images (once again Example \ref{Ex_CSMC_A}%
), which is why employing this representation usually requires extra
assumptions (see \cite{BH}).

\begin{theorem}
[\textbf{Tail-measurable hulls and separation}]\label{P_THullSep}Let
$\mathfrak{S}=(X,\mathcal{A},\lambda,T)$ be a null-preserving system. Take
$A,B\in\mathcal{A}$, then\newline\newline\textbf{(i)} $A^{\wr}:=%
{\textstyle\bigcup\nolimits_{m\geq0}}
T^{-m}%
\hat{T}%
^{m}A$\quad is the tail-measurable hull of $A$,\newline\newline\textbf{(ii)}
it satisfies $(%
\hat{T}%
A)^{\wr}\overset{.}{\subseteq}%
\hat{T}%
A^{\wr}$, and\newline\newline\textbf{(iii)} $(A^{\wr})^{c}$ is the largest set
which remains separated from $A$.\newline\newline\textbf{(iv)} Moreover, $%
\begin{array}
[t]{ccc}%
A^{\wr}\cap B^{\wr}\overset{.}{=}\varnothing & \text{iff} & A^{\wr}\text{ and
}B^{\wr}\text{ remain separated}\\
& \text{iff} & A\text{ and }B\text{ remain separated.}%
\end{array}
$
\end{theorem}

\begin{proof}
\textbf{(i)} Note first that by (\ref{Eq_EssentialImageBabyProp_01_f}),
$A_{m}:=T^{-m}%
\hat{T}%
^{m}A=T^{-(m-1)}(T^{-1}%
\hat{T}%
(%
\hat{T}%
^{m-1}A))\overset{.}{\supseteq}T^{-(m-1)}(%
\hat{T}%
^{m-1}A)=A_{m-1}$ for $m\geq1$. Therefore $A^{\wr}\overset{.}{=}%
{\textstyle\bigcup\nolimits_{m\geq n}}
A_{m}$ for all $n\geq0$. To see that $A^{\wr}$ is a tail set, we validate
condition (iii) of Theorem \ref{P_TCE}. Fix any $n\geq0$, then
(\ref{Eq_EssentialImageBabyProp_01_i}) and
(\ref{Eq_EssentialImageBabyProp_01_g}) show that
\begin{multline*}
T^{-n}%
\hat{T}%
^{n}A^{\wr}\overset{.}{=}T^{-n}%
\hat{T}%
^{n}%
{\textstyle\bigcup\nolimits_{m\geq n}}
A_{m}\overset{.}{=}%
{\textstyle\bigcup\nolimits_{m\geq n}}
T^{-n}\left(
\hat{T}%
^{n}T^{-n}(T^{-(m-n)}%
\hat{T}%
^{m}A)\right) \\
\overset{.}{\subseteq}%
{\textstyle\bigcup\nolimits_{m\geq n}}
T^{-n}(T^{-(m-n)}%
\hat{T}%
^{m}A)\overset{.}{=}A^{\wr}\text{,}%
\end{multline*}
while $A^{\wr}\overset{.}{\subseteq}T^{-n}%
\hat{T}%
^{n}A^{\wr}$ by (\ref{Eq_EssentialImageBabyProp_01_f}). Hence $A^{\wr}$ is a
tail set. Evidently, $A\overset{.}{\subseteq}A^{\wr}$. Let $Z$ be any tail set
with $A\overset{.}{\subseteq}Z$. Appealing to condition (iii) of Theorem
\ref{P_TCE} again, we then get $T^{-n}%
\hat{T}%
^{n}A\overset{.}{\subseteq}T^{-n}%
\hat{T}%
^{n}Z\overset{.}{=}Z$ for $n\geq0$, so that $A^{\wr}\overset{.}{\subseteq}Z$.
This confirms that $A^{\wr}$ is a tail-measurable hull of $A$.

\textbf{(ii)} Again exploiting monotonicity of $(A_{m})$ and appealing to
(\ref{Eq_EssentialImageBabyProp_01_f}) we see that $T^{-1}(%
\hat{T}%
A)^{\wr}\overset{.}{=}T^{-1}%
{\textstyle\bigcup\nolimits_{m\geq0}}
T^{-m}%
\hat{T}%
^{m+1}A\overset{.}{=}%
{\textstyle\bigcup\nolimits_{m\geq1}}
A_{m}\overset{.}{=}A^{\wr}\overset{.}{\subseteq}T^{-1}%
\hat{T}%
A^{\wr}$. This is equivalent to (ii).

\textbf{(iii)} It is immediate from (iv) in Theorem \ref{P_TCE} that
$B:=(A^{\wr})^{c}$ remains separated from $A$. To prove that $B$ is maximal
(mod $\lambda$) with this property, take any $C\in\mathcal{A}$ which remains
separated from $A$, and assume for a contradiction that $C\cap A^{\wr}$ has
positive measure. By definition of $A^{\wr}$ this means that there is some
$m\geq0$ for which $\lambda(C\cap T^{-m}%
\hat{T}%
^{m}A)>0$. By the definition of $%
\hat{T}%
^{m}C$, however, the latter is equivalent to $\lambda(%
\hat{T}%
^{m}C\cap%
\hat{T}%
^{m}A)>0$, thus contradicting our assumption that $A$ and $C$ remain separated.

\textbf{(iv)} Assume first that $A$ and $B$ remain separated. Then (iii)
ensures that $B\overset{.}{\subseteq}(A^{\wr})^{c}$, whence $B^{\wr}%
\overset{.}{\subseteq}(A^{\wr})^{c}$. Conversely, suppose that $A^{\wr}\cap
B^{\wr}\overset{.}{=}\varnothing$. Then $%
\hat{T}%
^{n}A\overset{.}{\subseteq}%
\hat{T}%
^{n}A^{\wr}$ while $%
\hat{T}%
^{n}B\overset{.}{\subseteq}%
\hat{T}%
^{n}B^{\wr}\overset{.}{\subseteq}%
\hat{T}%
^{n}(A^{\wr})^{c}$, and by (iv) of Theorem \ref{P_TCE} we have $%
\hat{T}%
^{n}A^{\wr}\cap%
\hat{T}%
^{n}(A^{\wr})^{c}\overset{.}{=}\varnothing$ for every $n\geq0$. Hence $A$ and
$B$ remain separated. Finally, apply the equivalence just established with
$A^{\wr}$ and $B^{\wr}$ in place of $A$ and $B$, and use that $(A^{\wr})^{\wr
}\overset{.}{=}A^{\wr}$ and $(B^{\wr})^{\wr}\overset{.}{=}B^{\wr}$.
\end{proof}%

\vspace{0.2cm}%
%

\noindent
\textbf{Exactness. More on tail sets.} The null-preserving system
$\mathfrak{S}=(X,\mathcal{A},\lambda,T)$ is said to be \emph{exact} if
$\mathfrak{T}(\mathfrak{S})$ is trivial (mod $\lambda$), that is, if
$A\in\mathfrak{T}(\mathfrak{S})$ implies $0\in\{\lambda(A),\lambda(A^{c})\}$.
The following provides a highly tangible characterization of exactness.

\begin{theorem}
[\textbf{Exactness via separation}]Let $\mathfrak{S}=(X,\mathcal{A}%
,\lambda,T)$ be a null-preserving system. The following are
equivalent:\newline\newline\textbf{(i)} $\mathfrak{S}$ is exact;\newline%
\newline\textbf{(ii)} $\mathfrak{S}$ has no nontrivial corridors;\newline%
\newline\textbf{(iii)} no two sets of positive measure remain separated.
\end{theorem}

\begin{proof}
(i) implies (iii): Suppose that $\mathfrak{S}$ is exact and $A,B\in
\mathcal{A}$ remain separated. According to (iv) of Theorem \ref{P_THullSep}
this means that $A^{\wr}\overset{.}{\supseteq}A$ and $B^{\wr}\overset
{.}{\supseteq}B$ are disjoint tail sets. By exactness therefore $0\in
\{\lambda(A^{\wr}),\lambda(B^{\wr})\}$ and \emph{a fortiori} $0\in
\{\lambda(A),\lambda(B)\}$.

(iii) implies (i): Assume (iii) and take any tail set $A$. By (iv) of Theorem
\ref{P_TCE}, \thinspace$A$ and $A^{c}$ remain separated, hence $0\in
\{\lambda(A),\lambda(A^{c})\}$ proving that $\mathfrak{S}$ is exact.

Equivalence of (i) and (ii) is also clear from Theorem \ref{P_TCE}.
\end{proof}%

\vspace{0.2cm}%

\begin{remark}
This also follows from Lin's characterization of exact maps as those whose
transfer operators overlap supports (Theorem 1 of \cite{Lin2}). However,
formulating this principle on the level of sets does require the concept of
essential images.
\end{remark}

It is immediate from the definitions that every invariant set $A\in
\mathcal{A}$ is a tail set. The notion of forward separation allows us to give
a concise characterization of situations in which the converse is true (see
\cite{Lenci}). Its consequence (\ref{Eq_Noggi}) is sometimes used to prove
exactness (\cite{Noggi}). So far, these were only available for systems with
measurable images and no ambitious null-sets.

\begin{theorem}
[\textbf{Tail sets versus invariant sets}]\label{P_TailVsInvariant}Let
$\mathfrak{S}=(X,\mathcal{A},\lambda,T)$ be a null-preserving system. Then the
following two properties are equivalent:\newline\newline\textbf{(i)} every
tail set is invariant;\newline\newline\textbf{(ii)} if $A\in\mathcal{A}$, then
$A$ and $%
\hat{T}%
A$ remain separated iff $A\overset{.}{=}\varnothing$.\newline\newline As a
consequence,
\begin{equation}
\mathfrak{S}\text{ is exact \quad iff \quad}%
\begin{array}
[t]{c}%
\mathfrak{S}\text{ is ergodic and for all }A\in\mathcal{A}\text{, }\\
A\text{ remains separated from }%
\hat{T}%
A\text{ iff }A\overset{.}{=}\varnothing\text{.\newline\newline}%
\end{array}
\label{Eq_Noggi}%
\end{equation}

\end{theorem}

\begin{proof}
Assume (i) and take any $A\in\mathcal{A}$ for which $A$ and $%
\hat{T}%
A$ remain separated. Theorem \ref{P_THullSep} shows that $A^{\wr}$ and $(%
\hat{T}%
A)^{\wr}$ also remain separated, and therefore $(%
\hat{T}%
A)^{\wr}\cap A^{\wr}\overset{.}{=}\varnothing$. But due to our assumption, the
tail-measurable hull $A^{\wr}$ is an invariant set, and recalling (ii) of
Theorem \ref{P_THullSep} we get $(%
\hat{T}%
A)^{\wr}\overset{.}{\subseteq}%
\hat{T}%
A^{\wr}\overset{.}{\subseteq}A^{\wr}$. These two statements together imply
that $\lambda((%
\hat{T}%
A)^{\wr})=0$, which entails $\lambda(%
\hat{T}%
A)=0$, and hence $\lambda(A)=0$.

Suppose now that $\mathfrak{S}$ satisfies (ii). Take any tail set $A$, and
consider $C:=%
\hat{T}%
A\cap A^{c}$ and $B:=A\cap T^{-1}C$. Then $B\overset{.}{\subseteq}A$ while
(\ref{Eq_EssentialImageBabyProp_01_h}) shows that $%
\hat{T}%
B\overset{.}{=}%
\hat{T}%
A\cap C\overset{.}{=}C\overset{.}{\subseteq}A^{c}$. In view of Theorem
\ref{P_TCE}, $A$ and $A^{c}$ remain separated, and hence so are $B$ and $%
\hat{T}%
B$. Because of (ii) we thus have $B\overset{.}{=}\varnothing$, and hence
$C\overset{.}{=}%
\hat{T}%
B\overset{.}{=}\varnothing$ by (\ref{Eq_PositivityPreservation}). This means
that $%
\hat{T}%
A\overset{.}{\subseteq}A$. But $A^{c}$ is a tail set, too, and the same
argument yields $%
\hat{T}%
A^{c}\overset{.}{\subseteq}A^{c}$. We conclude that $A$ is invariant (Theorem
\ref{Prop_BasicPropsViaEssentialImages}).

The criterion (\ref{Eq_Noggi}) follows immediately.
\end{proof}%

\vspace{0.2cm}%
%

\noindent
\textbf{Exactness and growth of image sets.} As already pointed out in
\cite{Rokhlin}, in the case of systems preserving a probability measure $\mu$
exactness is related to the growth (in measure) of image sets. The following
is well known under the assumptions that $\lambda=\mu$ and that $T$ should
have measurable images. If we use essential images, that extra measurability
condition is no longer required.

\begin{theorem}
[\textbf{Exactness of probability preserving systems}]%
\label{T_ExactPPSystemsGrow}Let $\mathfrak{S}=(X,\mathcal{A},\lambda,T)$ be a
null-preserving system and assume that $\mathfrak{S}$ admits an invariant
probability measure $\mu\ll\lambda$. Then $\mathfrak{S}$ is exact iff%
\begin{equation}
A\in\mathcal{A}\text{ and }\lambda(A)>0\text{\quad imply\quad}\mu(%
\hat{T}%
^{n}A)\longrightarrow1\text{ as }n\rightarrow\infty\text{.}%
\label{Eq_ExactViaImageGrowth}%
\end{equation}
If $\mu$ is equivalent to $\lambda$ and $\lambda(X)=1$, then $\lambda(%
\hat{T}%
^{n}A)\longrightarrow1$ holds as well.
\end{theorem}

\begin{remark}
In view of statement (ii) of Theorem \ref{P_EssImgVsSetImg}, having $%
\hat{T}%
^{n}A$ instead of $T^{n}A$ in (\ref{Eq_ExactViaImageGrowth}) does not weaken
the conclusion in the classical case where $\mu=\lambda$ and $T$ has
measurable images. On the contrary, (\ref{Eq_ExactViaImageGrowth}) is stronger
in that $\mu(T^{n}A)$ may be strictly larger than $\mu(%
\hat{T}%
^{n}A)$.
\end{remark}

\begin{proof}
Assume that $\mathfrak{S}$ is exact. Choose any $A\in\mathcal{A}$ with
$\lambda(A)>0$. Since the tail$-\sigma$-algebra is trivial, we then have
$A^{\wr}\overset{.}{=}X$. According to Theorem \ref{P_THullSep}, $T^{-n}%
\hat{T}%
^{n}A\overset{.}{\nearrow}X$ as $n\rightarrow\infty$, so that $\mu(%
\hat{T}%
^{n}A)=\mu(T^{-n}%
\hat{T}%
^{n}A)\nearrow1$ as required. Writing $B_{n}:=(%
\hat{T}%
^{n}A)^{c}$, the latter convergence means $\mu(B_{n})\rightarrow0$ which
implies $\lambda(B_{n})\rightarrow0$ and hence $\lambda(%
\hat{T}%
^{n}A)\rightarrow1$ in case these are equivalent probability measures.

The converse is contained in the next result.
\end{proof}

\begin{remark}
Alternatively, this proposition also follows, via the identity in Theorem
\ref{T_ImgMeasChar2} (iv), from Lin's characterization of exactness in terms
of complete mixing of the transfer operator (see \cite{A0} or \cite{Lin}).
Again, formulating this principle on the level of sets requires the concept of
essential images.
\end{remark}

In the absence of an invariant measure, the growth of images is no longer
necessary for exactness, see \cite{BE}. But a weak version of it is still
sufficient. The following generalizes similar results established in \cite{ES}
and \cite{Barnes}. Below, a null-preserving system $\mathfrak{S}%
=(X,\mathcal{A},\lambda,T)$ with $\lambda(X)=1$ is said to be \emph{limsup
full} if $\overline{\lim}_{n\rightarrow\infty}\lambda(%
\hat{T}%
^{n}A)=1$ for every $A\in\mathcal{A}$ with $\lambda(A)>0$. Note that this
property is not affected if we replace $\lambda$ by any equivalent probability
measure. We can therefore call an arbitrary null-preserving system
$\mathfrak{S}=(X,\mathcal{A},\lambda,T)$ \emph{limsup full} if it has the
above property for one (and hence all) probability measures equivalent to
$\lambda$.

\begin{theorem}
[\textbf{Properties of} \textbf{limsup full systems}]Let $\mathfrak{S}%
=(X,\mathcal{A},\lambda,T)$ be a null-preserving system. If $\mathfrak{S}$ is
limsup full, then it is nonsingular, conservative and exact.
\end{theorem}

\begin{proof}
Assume w.l.o.g. that $\lambda(X)=1$. As $(%
\hat{T}%
^{n}X)_{n\geq0}$ is non-decreasing (mod $\lambda$), we have $\lambda(%
\hat{T}%
X)\geq\overline{\lim}_{n\rightarrow\infty}\lambda(%
\hat{T}%
^{n}X)=1$, so that $%
\hat{T}%
X\overset{.}{=}X$ and $\mathfrak{S}$ is nonsingular by Theorem
\ref{P_CharNonsingSys}. The system is seen to be conservative ergodic via
condition (iii) of Theorem \ref{P_CEsystems}. Indeed, for any $A\in
\mathcal{A}$ with $\lambda(A)>0$, Fatou's lemma gives $\lambda(\overline{\lim
}_{n\rightarrow\infty}%
\hat{T}%
^{n}A)\geq\overline{\lim}_{n\rightarrow\infty}\lambda(%
\hat{T}%
^{n}A)=1$.

To show it is exact, we use Theorem \ref{P_TailVsInvariant}. Take any
$A\in\mathcal{A}$ with $\lambda(A)>0$. For $\varepsilon:=\frac{1}{4}$ pick a
corresponding $\delta>0$ as in Lemma \ref{L_BigEssImg}. As $\mathfrak{S}$ is
lim sup full, there is some $n\geq1$ such that $\lambda(%
\hat{T}%
^{n}A)\geq\max(1-\delta,\frac{3}{4})$, which first entails $\lambda(%
\hat{T}%
^{n+1}A)\geq\frac{3}{4}$, and then $\lambda(%
\hat{T}%
^{n}A\cap%
\hat{T}%
^{n+1}A)>0$. Therefore $A$ and $%
\hat{T}%
A$ do not remain separated.
\end{proof}%

\vspace{0.2cm}%

\section{Generators for null-preserving systems}

The existence of dynamically generating partitions is a classical topic in
ergodic theory (see for example \cite{RohlinEntro}, \cite{Parry}), which
remains of current interest (\cite{Tser}, \cite{Hoch}). In
\cite{KrengelStrong} invertible conservative nonsingular maps which do
\emph{not} admit any absolutely continuous invariant probability have been
shown to possess one-sided generators which consists of only two sets. An
extension of this remarkable result to noninvertible null-preserving maps has
been announced in \cite{Kopf}. The first purpose of the present section is to
point out that the latter generalization is false. Its flawed proof is
invalidated by an incorrect use of image sets. On the positive side, we then
illustrate the use of essential images in proving a sharp lower bound for the
cardinality of generators in that setup.

Given a $\sigma$-finite measure space $(X,\mathcal{A},\lambda)$ we shall call
a family $\eta\subseteq\mathcal{A}$ a \emph{(countable)} $\lambda
$\emph{-partition of} $X$ if it is finite or countably infinite with $H\cap
H^{\prime}\overset{.}{=}\varnothing$ for distinct members $H,H^{\prime}$ of
$\eta$ and if it is also a $\lambda$\emph{-cover of} $X$ in that $X\overset
{.}{\subseteq}\bigcup_{H\in\eta}H$. If $\mathfrak{S}=(X,\mathcal{A}%
,\lambda,T)$ is null-preserving, such a collection $\eta$ is said to be a
\emph{(one-sided} or \emph{strong) generator for} $\mathfrak{S}$ if
$\sigma(T^{-n}\eta:n\geq0)\overset{.}{=}\mathcal{A}$ (meaning that for each
$A\in\mathcal{A}$ there is some element $B$ of the left-hand $\sigma$-algebra
for which $A\overset{.}{=}B$). It is an $m$\emph{-set generator} if it
contains exactly $m$ non-null sets.

The following assertion is contained in Theorem 9 of \cite{Kopf}:%
\begin{equation}%
\begin{array}
[c]{c}%
\text{Let }\mathfrak{S}=(X,\mathcal{A},\lambda,T)\text{ be null-preserving
with measurable images on}\\
\text{a countably generated space. If }\mathfrak{S}\text{ does not admit an
absolutely}\\
\text{continuous invariant probability, then it admits a two-set generator.}%
\end{array}
\label{Eq_Koepfchen}%
\end{equation}
This is easily seen to be incorrect:

\begin{example}
[\textbf{A simple counterexample to statement (\ref{Eq_Koepfchen})}]Take
$X:=(0,1)\cup\mathbb{N}$ equipped with the trace $\mathcal{A}$ of the
Borel-$\sigma$-algebra $\mathcal{B}_{\mathbb{R}}$, and let $\lambda
:=\lambda^{1}+\#$, where $\lambda^{1}$ denotes one-dimensional Lebesgue
measure and $\#$ is counting measure. Let $Tx:=1$ for $x\in(0,1)$ and
$Tx:=x+1$ otherwise. This is easily seen to define a null-preserving system
which is totally dissipative, and hence does not admit an absolutely
continuous invariant probability measure. Still, there is no finite generator
$\eta$, since for every $H\in\mathcal{A}$ and $n\geq1$ we have $(0,1)\cap
T^{-n}H\in\{\varnothing,(0,1)\}$, and the trace of $\sigma(T^{-n}\eta:n\geq1)$
in $(0,1)$ is therefore always trivial.
\end{example}

Below we will provide further (finite- or countable-to-one) counterexamples,
closer to the finite measure preserving case in that they are still
conservative and even possess an (infinite) $\sigma$-finite invariant measure
equivalent to $\lambda$.

For finite measure preserving countable-to-one maps on a standard space which
are (at least) $m$-to-one, it is known that any generator has to contain at
least $m$ elements, see \cite{Kow}, \cite{Kow2}. We are going to extend this
result to null-preserving maps on arbitrary spaces. The assumption of the
following result means that there is a part of the space $X $ on which $T$ is
(at least) $m$-to-one. It is, for example, fulfilled if $T$ contains $m$
nonsingular branches with images covering $Y$.

\begin{theorem}
[\textbf{Sharp lower bound for the cardinality of generators}]%
\label{T_LowerBdGenerators}Let $\mathfrak{S}=(X,\mathcal{A},\lambda,T)$ be a
null-preserving system and suppose there are sets $Z_{1},\ldots,Z_{m}%
\in\mathcal{A}$ such that $Z_{j}\cap Z_{l}\overset{.}{=}\varnothing$ for
$j\neq l$, and $%
\hat{T}%
Z_{j}\overset{.}{=}Y$ for some common $Y\in\mathcal{A}$ with $\lambda(Y)>0$.
Then no $\lambda$-partition $\eta$ of less than $m$ elements can be a
generator for $\mathfrak{S}$.
\end{theorem}

To see that $m$ in the theorem is a \emph{sharp} lower bound for the class of
systems considered in \cite{Kopf}, consider the following

\begin{example}
For $(X,\mathcal{A},\lambda)=([0,1],\mathcal{B}_{[0,1]},\lambda^{1})$ and
$m\geq1$ consider the $m$-to-$1$ map given by
\[
Tx:=\left\{
\begin{array}
[c]{ll}%
\dfrac{x}{1-x} & \text{for }x\leq\frac{1}{2}\text{,}\\
& \\
2(m-1)x\text{ (mod }1\text{)} & \text{for }x>\frac{1}{2}\text{.}%
\end{array}
\right.
\]
This map belongs to the family of systems studied in \cite{T1}, \cite{T2},
where they are\ shown to be conservative ergodic with an infinite $\sigma
$-finite invariant measure $\mu$ equivalent to $\lambda$. Consequently, there
is no absolutely continuous invariant probability. In view of our theorem, any
generator has to contain at least $m$ distinct sets. On the other hand, its
basic partition is indeed an $m$-set generator (by standard arguments, as the
diameters of higher-rank cylinders shrink to zero).
\end{example}

\begin{example}
The family of \cite{T1}, \cite{T2} also contains maps with infinitely many
branches, for example
\[
Tx:=\frac{x}{1-x}\text{ (mod }1\text{).}%
\]
For any such $T$, our theorem applies with arbitrarily large $m$, showing that
these maps do not possess any finite generators.
\end{example}

We begin with an easy preparatory observation.

\begin{lemma}
\label{L_CombinatoriX}Let $(Y,\mathcal{B},\nu)$ be a measure space and
$m\geq2$. For $j\in\{1,\ldots,m\}$ let $\{C_{j}(i)\}_{i=1}^{m-1}%
\subseteq\mathcal{B}$ be a $\nu$-cover of $Y$. Then there exist $i^{\ast}%
\in\{1,\ldots,m-1\}$ and distinct $j_{1},j_{2}\in\{1,\ldots,m\}$ such that
$\nu(C_{j_{1}}(i^{\ast})\cap C_{j_{2}}(i^{\ast}))>0$.
\end{lemma}

\begin{proof}
Each $\{C_{j}(i)\}_{i=1}^{m-1}$ being a $\nu$-cover of $Y$, we have
$\sum_{i=1}^{m-1}1_{C_{j}(i)}\geq1_{Y}$ a.e. for all $j$, and hence
$s:=\sum_{i=1}^{m-1}\sum_{j=1}^{m}1_{C_{j}(i)}\geq m$ a.e. on $Y$. Assume, for
a contradiction, that for each $i$ the sets $C_{j}(i)$, $j\in\{1,\ldots,m\}$,
are pairwise disjoint (mod $\nu$), then $\sum_{j=1}^{m}1_{C_{j}(i)}\leq1$ a.e.
and thus $s\leq m-1$ a.e. on $Y$.
\end{proof}%

\vspace{0.2cm}%

\begin{proof}
[\textbf{Proof of Theorem \ref{T_LowerBdGenerators}}]The argument relies on
the basic observation that if a countable $\lambda$-partition $\eta$ of $X$ is
a generator for $\mathfrak{S}$, then
\begin{equation}
\mathcal{A}\overset{.}{=}\sigma\left(  \eta\cup T^{-1}(%
{\textstyle\bigcup\nolimits_{n\geq0}}
T^{-n}\eta)\right)  \subseteq\sigma(\eta\cup T^{-1}\mathcal{A})\text{.}%
\label{Eq_Rohkiln}%
\end{equation}

Let $\eta=\{H_{1},\ldots,H_{m-1}\}$ be any $\lambda$-partition of $X$ with
fewer than $m$ elements. (Some of the $H_{i}$ may be null.) We first observe
that there are some $H_{i^{\ast}}\in\eta$, two distinct indices $j_{1}%
,j_{2}\in\{1,\ldots,m\}$, and some $Y^{\prime}\in\mathcal{A}$ with
$\lambda(Y^{\prime})>0$ such that
\begin{equation}%
\begin{array}
[c]{c}%
\lambda\circ(T\mid_{V_{k}})^{-1}\ \text{is equivalent}\ \text{to }%
\lambda\text{ on }Y^{\prime}\text{,}\\
\text{where }V_{k}:=Z_{j_{k}}\cap H_{i^{\ast}}\cap T^{-1}Y^{\prime}\text{,
}k\in\{1,2\}\text{.}%
\end{array}
\label{Eq_ejhvfberuhjfb}%
\end{equation}
To see this, consider the sets
\[
C_{j}(i):=%
\hat{T}%
(Z_{j}\cap H_{i})\in\mathcal{A}\text{, \quad}j\in\{1,\ldots,m\},i\in
\{1,\ldots,m-1\}\text{,}%
\]
and apply Lemma \ref{L_CombinatoriX} to obtain $H_{i^{\ast}}$ and distinct
$Z_{j_{1}},Z_{j_{2}}$ for which
\[
Y^{\prime}:=%
\hat{T}%
(Z_{j_{1}}\cap H_{i^{\ast}})\cap%
\hat{T}%
(Z_{j_{2}}\cap H_{i^{\ast}})\in\mathcal{A}\text{ \quad satisfies \quad}%
\lambda(Y^{\prime})>0\text{.}%
\]
These sets have the property (\ref{Eq_ejhvfberuhjfb}). (The measure
$\lambda\circ(T\mid_{V_{k}})^{-1}$ has density $\widehat{T}1_{V_{k}}$ and is
thus equivalent to $\lambda$ on $\{\widehat{T}1_{V_{k}}>0\}\overset{.}{=}%
\hat{T}%
V_{k}\overset{.}{=}Y^{\prime}$ by definition of $Y^{\prime}$.)

Now let $D:=V_{1}\in\mathcal{A}$, which clearly satisfies $\lambda(V_{1}\cap
D)>0$ while $\lambda(V_{2}\cap D)=0$. We are going to show that no such set
can belong to $\sigma(\eta\cup T^{-1}\mathcal{A})$ (mod $\lambda$).

It is clear that $\sigma(\eta\cup T^{-1}\mathcal{A})=\{\bigcup_{i=1}%
^{m-1}H_{i}\cap T^{-1}B_{i}:B_{i}\in\mathcal{A}\}$, and since $V_{k}\subseteq
Z_{j_{k}}\cap H_{i^{\ast}}$ we see that for any $F\in\sigma(\eta\cup
T^{-1}\mathcal{A})$ there is one $B:=B_{i^{\ast}}\in\mathcal{A}$ for which
\[
(V_{1}\cup V_{2})\cap F=(T\mid_{V_{1}})^{-1}B\cup(T\mid_{V_{2}})^{-1}B\text{.}%
\]
Due to (\ref{Eq_ejhvfberuhjfb}), then, the sets
\begin{equation}
E_{k}:=(T\mid_{V_{k}})^{-1}B=V_{k}\cap F\text{ \quad satisfy \quad}%
\lambda(E_{k})>0\text{ iff }\lambda(B)>0\text{.}%
\end{equation}
If $\lambda(E_{1})=0$, then $\lambda(D\bigtriangleup F)\geq\lambda(D\setminus
F)=\lambda(D)>0$. On the other hand, if $\lambda(E_{1})>0$, then
$\lambda(D\bigtriangleup F)\geq\lambda(F\setminus D)\geq\lambda(E_{2})>0$ as
well. Hence there is no set in $\sigma(\eta\cup T^{-1}\mathcal{A})$ which
matches $D$ up to a null-set.

This shows that $\eta$ fails (\ref{Eq_Rohkiln}) and therefore cannot be a
generator for $\mathfrak{S}$.
\end{proof}%

\vspace{0.2cm}%

\end{document}